\documentclass[hidelinks,onefignum,onetabnum]{siamart251216}

\usepackage{lipsum}
\usepackage{amsfonts}
\usepackage{graphicx}
\usepackage{epstopdf}
\ifpdf
  \DeclareGraphicsExtensions{.eps,.pdf,.png,.jpg}
\else
  \DeclareGraphicsExtensions{.eps}
\fi

\newsiamremark{remark}{Remark}
\newsiamremark{hypothesis}{Hypothesis}
\crefname{hypothesis}{Hypothesis}{Hypotheses}
\newsiamthm{claim}{Claim}
\newsiamremark{fact}{Fact}
\crefname{fact}{Fact}{Facts}

\headers{An Example Article}{D. Doe, P. T. Frank, and J. E. Smith}

\title{An Example Article\thanks{Submitted to the editors DATE.
\funding{This work was funded by the Fog Research Institute under contract no.~FRI-454.}}}

\author{Dianne Doe\thanks{Imagination Corp., Chicago, IL 
  (\email{ddoe@imag.com}, \url{http://www.imag.com/\string~ddoe/}).}
\and Paul T. Frank\thanks{Department of Applied Mathematics, Fictional University, Boise, ID 
  (\email{ptfrank@fictional.edu}, \email{jesmith@fictional.edu}).}
\and Jane E. Smith\footnotemark[3]}

\usepackage{amsopn}

\ifpdf
\hypersetup{
  pdftitle={Asymptotic and Finite Time Analysis of Stochastic Markov Mirror Descent Algorithm with Applications in Reinforcement Learning},
  pdfauthor={Anik}
}
\fi

\title{Stochastic Mirror Descent under Iterate-Dependent Markov Noise: Analysis in the Asymptotic and Finite Time Regimes\thanks{Submitted to the editors DATE.\\The authors are with the Department of Computer Science and Automation, Indian Institute of Science, Bengaluru 560012. Email: anikpaul42@gmail.com, shalabh@iisc.ac.in.}
\funding{This work was funded by the Walmart Centre for Tech Excellence, IISc; by the J. C. Bose Grant with No. ANRF/JBG/2025/000209/HAA from ANRF, Government of India; and by the Prof.\ B. S. Sonde Chair Professorship from IISc.}}

\author{Anik Kumar Paul
\and  Shalabh Bhatnagar }

\usepackage{float}
\usepackage[labelfont=bf]{caption}
\usepackage{makecell}
\usepackage{multirow} 
\usepackage{xspace}
\usepackage{pifont}
\usepackage{xurl} 
\usepackage{enumitem}
\usepackage{threeparttable}
\usepackage{hyperref} 

\usepackage{graphicx}
\usepackage{booktabs}
\usepackage{diagbox}
\usepackage{cite}
\usepackage{graphicx}
\usepackage{graphics}
\usepackage{textcomp}
\usepackage{epsfig}
\usepackage{times}
\usepackage{amsmath}
\usepackage{amssymb}
\usepackage{amsfonts}
\usepackage{float}
\usepackage{siunitx}
\usepackage{scalerel}
\usepackage{cite}
\usepackage{textcomp}
\usepackage{stackengine}
\usepackage{mathtools}
\usepackage{caption}
\usepackage{subcaption}
\usepackage{nopageno}
\usepackage{comment}

\usepackage{booktabs} 
\usepackage{float}
\usepackage{placeins}
\usepackage[skip=3pt]{caption}

\usepackage{graphicx}
\usepackage{bm}
\usepackage{booktabs}

\newcommand{\ncom}{\newcommand}

\newcommand{\beqn}{\begin{eqnarray*}}
\newcommand{\eeqn}{\end{eqnarray*}}
\newcommand{\beq}{\begin{eqnarray}}
\newcommand{\eeq}{\end{eqnarray}}

\newcommand{\norm}[1]{\left\lVert #1 \right\rVert}
\newcommand{\inprod}[2]{\left\langle #1, #2 \right\rangle}
\ncom\R{\mathbb{R}}

\DeclareMathOperator*{\argmin}{arg\,min}

 \newtheorem{assumption}{Assumption}
 
\usepackage{algpseudocode}
\usepackage{tikz}

\begin{document}

\maketitle

\begin{abstract}
We study a stochastic optimization problem in which the sampling distribution depends on the decision variable, and the available samples are generated through an iterate-dependent Markov chain. Such settings arise naturally in problems with decision-dependent uncertainty; however, they introduce bias and temporal dependence, which render standard techniques developed for i.i.d.\ noise inapplicable.
In this work, we analyze the stochastic mirror descent algorithm under iterate-dependent Markov noise. We first establish almost sure convergence for both convex and non-convex problems under the mild assumption of Lipschitz continuity of the objective function, without requiring differentiability. We then derive finite-time concentration bounds for smooth objectives.
In the convex setting, the resulting sample complexity matches the classical rate of stochastic mirror descent under i.i.d.\ noise. In the non-convex setting, we obtain a sample complexity bound in terms of the norm of the Riemannian gradient over the probability simplex. 
Overall, our results establish a unified convergence framework for stochastic mirror descent with state-dependent Markov noise, and highlight its behavior in both convex and non-convex regimes.
\end{abstract}

\begin{keywords}
Mirror Descent Algorithm, Decision Dependent Distribution, Almost Sure Convergence, Concentration Bound.
\end{keywords}

\section{Introduction}

We consider the stochastic optimization problem
\begin{equation}
    \min_{x \in \mathcal{X}} f(x)
    := \mathbb{E}_{s \sim \mu_x}[F(x,s)]
    = \int_{\mathcal{S}} F(x,s)\, d\mu_x(s),
    \label{eq:main_problem1}
\end{equation}
where $\mathcal{X} \subseteq \mathbb{R}^d$ is a compact and convex constraint set. 
The function $F : \mathbb{R}^d \times \mathcal{S} \to \overline{\mathbb{R}}$ denotes the stochastic objective. 
Further, $(\mathcal{S}, \mathcal{B}(\mathcal{S}))$ is a measurable space, where $\mathcal{S}$ is a topological space equipped with its Borel $\sigma$-algebra. 
For each decision variable $x \in \mathcal{X}$, the probability measure $\mu_x$ characterizes the distribution of the stochastic state $s$.

\medskip

The formulation in \eqref{eq:main_problem1} extends the classical stochastic optimization framework, where the underlying randomness is typically modeled through a fixed distribution that does not depend on the decision variable \cite{Duchi2018IntroductoryLO,bubeck2011}. 
In that standard setting, one considers objectives of the form
\[
\mathbb{E}_{\zeta \sim \mathcal{P}}[F(x,\zeta)],
\]
where the distribution $\mathcal{P}$ remains independent of $x$. 
In contrast, in \eqref{eq:main_problem1}, the distribution $\mu_x$ varies with the decision variable $x$. 
This feature allows the model to capture state-dependent or decision-dependent uncertainty.

Such a formulation naturally appears in several applications, including reinforcement learning \cite{halperin2022reinforcement}, controlled Markov processes \cite{borkar2006stochastic}, and performative prediction \cite{cutler2024stochastic}. 
In these settings, the data-generating distribution evolves in response to the decisions made by the algorithm, making the dependence on $x$ intrinsic to the problem.
A standard assumption in such environments is that direct sampling from $\mu_x$ is not available. 
Instead, for each decision variable $x$, one simulates a Markov chain whose stationary distribution is $\mu_x$. 
This dependence on $x$ introduces significant analytical challenges.

In particular, the associated stochastic oracle is no longer unbiased in the classical sense. 
Moreover, the resulting noise sequence is  typically sampled through a Markov process. 
Consequently, many of the standard tools developed for stochastic optimization under i.i.d.\ noise are not directly applicable. 
This necessitates the development of new techniques to analyze the convergence and stability properties of the resulting algorithms.

Despite its theoretical importance and wide range of applications, the convergence and finite-time analysis of stochastic optimization under Markov noise has received attention only relatively recently. 
For instance, \cite{sun2018markov,even2023stochastic,doan2020convergence} study the asymptotic and finite-time behavior of stochastic gradient descent and its accelerated variants in settings where direct sampling from the target distribution is not available, and instead samples are obtained from an aperiodic and irreducible Markov chain with a unique stationary distribution. 
The analysis has also been extended to non-convex and unconstrained problems in \cite{kar2026high}.

However, these works are largely restricted to state-independent transition kernels, where the stationary distribution remains fixed. 
The state-dependent Markov setting has been studied in \cite{che2026stochastic}, which establishes convergence and convergence rates for stochastic gradient descent with iterate-dependent transition kernels in the unconstrained setting. 
We refer to \cite{roy2022constrained} for the analysis of non-convex and smooth optimization problems in the constrained setting under Markov noise.

\medskip

While stochastic gradient descent is naturally suited for Euclidean spaces, many practical problems involve constraints that induce non-Euclidean geometry. 
This motivates the use of Mirror Descent, which leverages a distance-generating function to induce a Bregman divergence and to better align the algorithm with the geometry of the constraint set \cite{rsa}. 
In the Markov setting, \cite{duchi2012ergodic} analyzes mirror descent under state-independent transition kernels, and \cite{solodkin2026methods} further studies this framework along with accelerated variants under the assumption of a strongly monotone gradient map. 

\medskip

However, to the best of our knowledge, there is no existing work on stochastic mirror descent with iterate-dependent transition kernels, either for convex or non-convex problems. Furthermore, even within the existing literature on standard stochastic gradient methods, most analyses establish convergence only in expectation (e.g., $L_1$ or $L_2$ convergence), ensuring that the average behavior across infinitely many runs converges. This is a significant practical limitation, as the expected convergence does not guaranty convergence during a single run \cite{liu2024almost}. Although almost sure convergence has been analyzed for unconstrained SGD under Markov noise \cite{doucet2017asymptotic}, the non-Euclidean framework required for mirror descent remains entirely unexplored.

\medskip

In this paper, we develop a comprehensive almost sure convergence analysis of stochastic mirror descent with iterate-dependent Markov noise. To the best of our knowledge, this is the first analysis of any stochastic gradient-based method in this Markov setting that handles both non-convex and non-smooth objectives, requiring only Lipschitz continuity rather than differentiability.

\medskip

Furthermore, we establish finite-time concentration bounds for stochastic mirror descent under iterate-dependent Markov noise with smooth objectives. 
For convex functions, the resulting sample complexity matches that of stochastic mirror descent under i.i.d.\ noise \cite{rsa}. 
For non-convex objectives, we derive a sample complexity in terms of the Riemannian gradient norm as defined in \cite{Boumal_2023} for the constrained set probability simplex, which, to the best of our knowledge, is the first such finite-time result for mirror descent, even in the i.i.d.\ noise setting. In Table \ref{tab:comparison}, we provide a comprehensive comparison highlighting the position of our work relative to the existing literature.

\noindent\textbf{Summary of Contributions.}
\begin{itemize}
    \item We establish \emph{almost sure convergence} of stochastic Markov mirror descent for both convex and non-convex problems under only Lipschitz continuity, without requiring differentiability.
    \item We derive \emph{finite-time concentration bounds} for stochastic mirror descent with iterate-dependent Markov noise in the smooth setting. For convex objectives, we show that the \emph{sample complexity matches} that of stochastic mirror descent under i.i.d.\ noise. 

For non-convex objectives, we obtain a \emph{finite-time guarantee in terms of the norm of the Riemannian gradient} over the probability simplex. To the best of our knowledge, this provides the \emph{first finite-time analysis of the mirror descent algorithm} in the presence of Markov noise.
\end{itemize}
\begin{table}[t]
\centering
\small
\setlength{\tabcolsep}{4pt} 
\begin{tabular}{
>{\raggedright\arraybackslash}p{1.3cm}
>{\raggedright\arraybackslash}p{2.0cm}
>{\raggedright\arraybackslash}p{2.4cm}
>{\raggedright\arraybackslash}p{2.8cm}
>{\raggedright\arraybackslash}p{3.2cm}
}
\toprule
\textbf{Alg} & \textbf{Literature} & \textbf{Noise Structure} & \textbf{Regularity} & \textbf{Main Results} \\
\midrule

SGD & \cite{rsa} & i.i.d.\ noise & Convex (smooth/non-smooth) & $L^2$ convergence rate; finite-time concentration bound \\

SGD & \cite{sun2018markov} & Markov (iterate-independent) & Smooth (convex/non-convex) & $L^2$ convergence rate \\

SGD & \cite{doucet2017asymptotic} & Markov (iterate-dependent) & Non-convex (smooth) & Almost sure convergence \\

SGD & \cite{che2026stochastic} & Markov (iterate-dependent) & Convex (smooth); non-convex (smooth) & $L^2$ convergence rate \\

SMD & \cite{rsa} & i.i.d.\ noise & Convex (smooth/non-smooth) & $L^2$ convergence rate; finite-time concentration bound \\

SMD & \cite{duchi2012ergodic} & Markov (iterate-independent) & Convex & $L^1$ convergence of function value \\

SMD & Ours & Markov (iterate-dependent) &  Only Lipschitz (both convex and non-convex) & Almost sure convergence; finite-time concentration bound for smooth function \\

\bottomrule
\end{tabular}
\caption{Comparison of stochastic optimization methods under different noise structures and objective regularity.}
\label{tab:comparison}
\end{table}
\subsection*{Notation}
We denote the real and extended real numbers by $\mathbb{R}$ and $\overline{\mathbb{R}}$, respectively. Let $\mathcal{X} \subseteq \mathbb{R}^d$ be a compact, convex constraint set, and let $(\mathcal{S}, \mathcal{B}(\mathcal{S}))$ denote a measurable space where $\mathcal{S}$ is a topological space. We equip $\mathbb{R}^d$ with a general norm $\|\cdot\|$, and denote its corresponding dual norm by $\|\cdot\|_*$. For a closed convex set $\mathcal{X}$ and a point $x \in \mathcal{X}$, $\mathcal{N}_{\mathcal{X}}(x)$ and $\mathcal{T}_{\mathcal{X}}(x)$ represent the normal and tangent cones at $x$, respectively. The Clarke subdifferential of a locally Lipschitz function $f$ at $x$ is denoted by $\partial f(x)$. 
\section{Problem Setup and Algorithmic Steps}
We consider throughout this paper the following stochastic optimization problem:
\begin{equation}
    \min_{x \in \mathcal{X}} f(x)
    := \mathbb{E}_{s \sim \mu_x}[F(x,s)]
    = \int_{\mathcal{S}} F(x,s)\, d\mu_x(s).
    \label{eq:main_problem}
\end{equation}

We now describe the key features of the problem.

\medskip

\noindent\textbf{Convex and nonconvex objectives.}
We allow the objective function $f$ to be either convex or nonconvex. Importantly, in establishing almost sure convergence of our algorithm, we do not require $f$ to be differentiable. This enables us to handle a broad class of nonsmooth problems.

However, for the finite-time analysis, additional structure is needed--- we assume that the function is continuously differentiable and has Lipschitz gradient. 

\medskip

\noindent\textbf{State-dependent Markov stochastic oracle.}
We consider a setting in which the randomness evolves according to a state-dependent Markov process.

More precisely, suppose that the oracle is currently at a state $s \in \mathcal{S}$. When queried at a point $x \in \mathcal{X}$, the oracle transitions to a new state $s'$ according to a transition kernel $\Pi(x,s)$, and returns the value $F(x,s')$. Formally, for any measurable set $A \subseteq \mathcal{S}$,
\[
    \mathbb{P}(s' \in A \mid x, s) = \Pi(x,s)(A).
\]

For each fixed $x$, we assume that the Markov process induced by $\Pi(x,\cdot)$ is ergodic and admits a unique stationary distribution $\mu_x$. That is, for every measurable set $A \subseteq \mathcal{S}$,
\[
    \mu_x(A)
    = \int_{\mathcal{S}} \Pi(x,s)(A)\, d\mu_x(s).
\]

Under this assumption, the objective function can be interpreted as the long-run average of the stochastic evaluations generated by the oracle. In other words, $f(x)$ represents the steady-state performance associated with the decision $x$.
Next we state the Stochastic  Markov Mirror Descent Algorithm to solve the optimization problem given in \eqref{eq:main_problem}. 

\subsection{Stochastic Markov Mirror Descent Algorithm}

\subsubsection{Bregman Divergence}

Let $R:\mathbb{R}^n \to \overline{\mathbb{R}}$ be a strongly convex function defined on an open set containing $\mathcal{X}$. In the mirror descent framework, this function acts as a mirror map, and it induces the Bregman divergence
\begin{equation}
    \mathbb{D}_R(x,y)
    := R(x) - R(y) - \langle \nabla R(y),\, x-y \rangle.
    \label{Bregman}
\end{equation}

This quantity provides a notion of distance that is tailored to the geometry defined by $R$. Unlike the standard Euclidean distance, the Bregman divergence is, in general, not symmetric in its arguments. Nevertheless, it remains nonnegative, a property that follows directly from the strong convexity of $R$.

We now impose the following structural assumption on the mirror map.

\begin{assumption}
The function $R : \mathbb{R}^d \to \mathbb{R}$ is twice continuously differentiable and $\sigma_R$-strongly convex with respect to the norm $\|\cdot\|$. That is, for all $x,y \in \mathbb{R}^d$,
\begin{equation*}
R(y) 
\ge 
R(x) 
+ \langle \nabla R(x), y - x \rangle 
+ \frac{\sigma_R}{2} \| y - x \|^2.
\end{equation*}
\label{Strong Convexity}
\end{assumption}
\begin{remark}
    For certain standard choices of $\mathcal{X}$ and $R$, such as the probability simplex and the negative entropy mirror map, $R$ may not be differentiable on the boundary of $\mathcal{X}$. In such cases, standard properties of the mirror descent update ensure that if the initialization lies in the relative interior of $\mathcal{X}$, all subsequent iterates strictly remain in the relative interior \cite{bubeck2011}. See Section $5$ for a detailed analysis.  Consequently, the required smoothness and strong convexity hold along the algorithmic trajectory.
\end{remark}
\subsubsection{Stochastic Markov Mirror Descent Algorithm}

We assume access to a stochastic oracle which, for every iterate $x \in \mathcal{X}$ and current Markov state $s$, returns a stochastic Markov subgradient $g(x,s) \in G(x,s).$
The assumptions imposed on the set-valued stochastic Markov subdifferential mapping $G(x,s)$ are stated in the following Assumption.

\begin{assumption}
The set-valued mapping $G:\mathcal{X}\times\mathcal{S}\rightrightarrows \mathbb{R}^d$
is such that, for every fixed $x\in\mathcal{X}$, the mapping $G(x,s)$
is Borel measurable. Furthermore, the Aumann integral of $G(x,\cdot)$ with respect to the invariant measure $\mu_x$ is given by
\begin{equation*}
\int_{\mathcal{S}} G(x,s')\,\mu_x(ds')
=
\left\{
\int_{\mathcal{S}} F(x,s')\,\mu_x(ds')
\;\middle|\;
F(x,\cdot)\in\mathcal{M}(G(x,\cdot)),
\;
F \text{ is } \mu_x\text{-integrable}
\right\}
\end{equation*}
where $\mathcal{M}(G(x,\cdot))$ denotes the collection of measurable selections of the set-valued map $G(x,\cdot)$.
In addition, we assume that
\begin{equation*}
\int_{\mathcal{S}} G(x,s')\,\mu_x(ds')
\subseteq
\partial f(x),
\end{equation*}
where $\partial f(x)$ denotes the Clarke subdifferential of the function $f$ \cite{clarke1990optimization}.
For a detailed discussion on the Aumann integral and measurable selections of set-valued maps, see  \cite{yaji2020stochastic}.
\label{assumpton3}
\end{assumption}
Assumption~\ref{assumpton3} can be viewed as the natural set-valued generalization of the first-order condition commonly employed in stochastic gradient methods with Markov noise for smooth objectives (see Assumption~3.2 in \cite{doucet2017asymptotic} or Equation~(3) in \cite{che2026stochastic}). Furthermore, this structural condition naturally arises in applied settings such as policy gradient methods for average-reward Markov decision processes \cite{li2025stochastic}.
We are now ready to present the Stochastic Markov Mirror Descent Algorithm.
 Let $x_n \in \mathcal{X}$ be the current iterate and let $S_n$ be the current state. The next iterate is obtained as
\begin{equation}
    x_{n+1}
    = \arg\min_{x \in \mathcal{X}}
    \left\{
        \langle g(x_n,S_n), x \rangle
        + \frac{1}{\alpha_n}\mathbb{D}_R(x,x_n)
    \right\},
    \label{mirror_descent1}
\end{equation}
where $\alpha_n>0$ denotes the step-size at instant $n$. 
The state evolves according to
\[
S_{n+1} \sim \Pi(x_n,S_n)(\cdot).
\]
The linear term drives descent in the direction of the stochastic gradient, while the Bregman divergence penalizes large deviations from $x_n$ in the geometry induced by $R$. The strong convexity of $R$ ensures that the above problem admits a unique solution. We need the following further Assumptions which are quite standard in the literature.
\begin{assumption}
The  step-size sequence $\{\alpha_n\}$ is non-increasing and  satisfies
$
\sum\limits_{n \geq 1} \alpha_n = \infty$, 
$\sum\limits_{n \geq 1} \alpha_n^2 < \infty$.
\label{assumptionstepsize}
\end{assumption}
\begin{assumption}
For each $x_n \in \mathcal{X}$, the transition kernel $\Pi(x_n,\cdot)$ is 
ergodic and admits a unique stationary distribution $\mu_{x_n}$.
\end{assumption}
\begin{assumption}
For each $(x,s) \in \mathcal{X} \times \mathcal{S}$, the set $G(x,s)$ is non-empty, convex, and compact.
\label{Rhsn}
\end{assumption}
\begin{assumption}
There exists $\mathbf{G} > 0$ such that
\begin{equation}
    \sup \left\{ \norm{g(x,s)}_\ast \;\middle|\; s \in \mathcal{S}, \; g(x,\cdot) \in \mathcal{M}(G(x,\cdot)) \right\}
    \leq \mathbf{G}.
\end{equation}
\label{Assumption 3}
\end{assumption}
\begin{assumption}
The map
$
G : \mathcal{X} \times \mathcal{S} \rightrightarrows \mathbb{R}^d
$ 
has a closed graph.
\label{Assumption 5}
\end{assumption}
Assumptions~\ref{Rhsn}, \ref{Assumption 3}, and \ref{Assumption 5} ensure that the limiting differential inclusion (cf.\ Theorem~\ref{Stochastic Projected Proposition}) is well-defined and that the corresponding set-valued map satisfies the Marchaud property.
%
The main objective of the next section is to establish that the iterates $\{x_n\}$ converge almost surely to the set of stationary points defined by
\[
S
=
\left\{
x \in \mathcal{X}
\;\middle|\;
\exists \; \zeta \in \partial f(x)
\text{ such that }
\inprod{\zeta}{y-x} \geq 0,
\quad
\forall \; y \in \mathcal{X}
\right\}.
\]
Furthermore, it follows from Lemma~6 of \cite{paul2025zeroth} that if $x^\ast$ is a local minimum of \eqref{eq:main_problem}, then $x^\ast \in S$.
\section{Almost Sure Convergence Analysis}

In this section, we show that the iterates generated by \eqref{mirror_descent1} can be represented as a stochastic approximation scheme with Markov noise in the sense of \cite{yaji2020stochastic}. 
\subsection{Stochastic Approximation with Markov Noise}
 We begin by deriving an \\ equivalent representation of the mirror descent iterates. This form is useful since it reveals the underlying dynamical structure and makes the role of geometry and constraints explicit.

\begin{proposition}
The mirror descent iterates $\{x_n\}$ generated by \eqref{mirror_descent1} admit the following equivalent representation:
\begin{equation}
x_{n+1} - x_n \in -\alpha_n H_2\big(x_n, S_n\big).
\label{stochastic_approximation}
\end{equation}
Here, the set-valued map $H_2 : \mathcal{X} \times \mathcal{S} \rightrightarrows \mathbb{R}^d$ is defined as
\begin{equation}
H_2\big(x_n, S_n\big)
=
\nabla^2 R(x_n)^{-1}
\Big(
G(x_n, S_n) + \widehat{\mathcal{N}}_{\mathcal{X}}(x_n) + b_n
\Big),
\end{equation}
where $b_n=o(\alpha_n)$, and
\begin{equation}
\widehat{\mathcal{N}}_{\mathcal{X}}(x_n)
=
\left\{
\eta \in \mathcal{N}_{\mathcal{X}}(x_n)
\,\middle|\,
\|\eta\|_\ast \le \left(1+\tfrac{L}{\sigma_R}\right) \mathbf{G}
\right\}.
\end{equation}
\label{Stochastic Projected Proposition}
\end{proposition}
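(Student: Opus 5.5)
The plan is to start from the first-order optimality condition of the strongly convex subproblem \eqref{mirror_descent1}, convert the resulting ``mirror-space'' update into a primal increment by a Taylor expansion of $\nabla R$ around $x_n$, and then move the normal-cone term from $x_{n+1}$ to $x_n$. All error terms will be collected into $b_n$. Throughout, $L$ denotes $\sup_{x\in\mathcal{X}}\|\nabla^2 R(x)\|$, viewed as an operator from $(\mathbb{R}^d,\|\cdot\|)$ to $(\mathbb{R}^d,\|\cdot\|_\ast)$. This quantity is finite because $R$ is $C^2$ (Assumption~\ref{Strong Convexity}) and $\mathcal{X}$ is compact. It is also the Lipschitz constant of $\nabla R$ on $\mathcal{X}$.

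\emph{Step 1 (optimality condition).} Write $g_n:=g(x_n,S_n)\in G(x_n,S_n)$. Since $x_{n+1}$ minimizes a strongly convex function over $\mathcal{X}$, there is $\eta_{n+1}\in\mathcal{N}_{\mathcal{X}}(x_{n+1})$ with
\[
\nabla R(x_{n+1})-\nabla R(x_n) = -\alpha_n\,(g_n+\eta_{n+1}).
\]

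\emph{Step 2 (step-length and normal-vector bounds).} Pair this identity with $x_{n+1}-x_n$. Use $\langle \eta_{n+1},x_n-x_{n+1}\rangle\le 0$, strong convexity of $R$ in the form $\langle\nabla R(x_{n+1})-\nabla R(x_n),x_{n+1}-x_n\rangle\ge\sigma_R\|x_{n+1}-x_n\|^2$, and Assumption~\ref{Assumption 3}. Together these give $\|x_{n+1}-x_n\|\le \alpha_n\mathbf{G}/\sigma_R$. It then follows that
\[
\|\eta_{n+1}\|_\ast\le \|g_n\|_\ast+\alpha_n^{-1}L\|x_{n+1}-x_n\|\le (1+L/\sigma_R)\mathbf{G},
\]
which is exactly the truncation radius in $\widehat{\mathcal{N}}_{\mathcal{X}}$. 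This explains the constant appearing in the statement.

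\emph{Step 3 (linearization).} By Taylor's theorem, $\nabla R(x_{n+1})-\nabla R(x_n)=\nabla^2R(x_n)(x_{n+1}-x_n)+r_n$, with $\|r_n\|_\ast\le \omega(\|x_{n+1}-x_n\|)\,\|x_{n+1}-x_n\|$. Here $\omega$ is the modulus of continuity of $\nabla^2R$ on a compact neighbourhood of $\mathcal{X}$, so $\|r_n\|_\ast=o(\alpha_n)$ by Step~2. Since $\nabla^2R(x_n)\succeq\sigma_R I$ is invertible, we obtain
\[
x_{n+1}-x_n=-\alpha_n\nabla^2R(x_n)^{-1}\bigl(g_n+\eta_{n+1}+\alpha_n^{-1}r_n\bigr).
\]
The $r_n$ contribution to the increment is therefore of order $o(\alpha_n)$. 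This is the sense in which I will read $b_n=o(\alpha_n)$: $\alpha_n b_n=o(\alpha_n)$, equivalently $b_n\to0$ inside the bracket.

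\emph{Step 4 (main obstacle: normal cone at $x_n$ instead of $x_{n+1}$).} I expect this step to be the hardest. I would choose $\tilde\eta_n$ as a nearest point of $\widehat{\mathcal{N}}_{\mathcal{X}}(x_n)$ to $\eta_{n+1}$, and put $\eta_{n+1}-\tilde\eta_n$ into $b_n$. To show that this discrepancy vanishes, I would use the closed graph and local boundedness of $x\mapsto\widehat{\mathcal{N}}_{\mathcal{X}}(x)$, together with $\|x_{n+1}-x_n\|\le\alpha_n\mathbf{G}/\sigma_R\to0$. The first thing I would try is a contradiction argument along a subsequence: a limit point of the pair $(x_{n+1},\eta_{n+1})$ lies in the graph of $\widehat{\mathcal{N}}_{\mathcal{X}}$, and $x_n$ has the same limit as $x_{n+1}$. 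If this only yields a vanishing (rather than uniform) discrepancy, I would fall back on the structure actually needed downstream. Namely, for the limiting differential inclusion it suffices that the perturbation tends to zero, because Marchaud's property is preserved under such vanishing perturbations. With that, the inclusion $x_{n+1}-x_n\in-\alpha_nH_2(x_n,S_n)$ follows with $b_n:=\alpha_n^{-1}r_n+\eta_{n+1}-\tilde\eta_n$.
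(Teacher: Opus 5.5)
Your Steps 1--3 are correct and match the paper's Claims 1--2. Your Taylor expansion of $\nabla R$, using the modulus of continuity of $\nabla^2 R$ on $\mathcal{X}$, is cleaner than the paper's expansion of $(\nabla R)^{-1}$ and is uniform in $n$.

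The genuine gap is Step 4. The discrepancy $\eta_{n+1}-\tilde\eta_n$ need not tend to zero along the iterates, and no argument can make it do so. Closed graph plus local boundedness gives upper semicontinuity of $x\mapsto\widehat{\mathcal{N}}_{\mathcal{X}}(x)$ at each fixed point. What you need is different: that $\widehat{\mathcal{N}}_{\mathcal{X}}(x_{n+1})$ stays close to $\widehat{\mathcal{N}}_{\mathcal{X}}(x_n)$ while the base point $x_n$ itself moves. That is a lower-semicontinuity property, and the normal cone lacks it because it jumps up at the boundary.

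Concretely, take $\mathcal{X}=[0,1]$, $R(x)=x^2/2$, $g_n=1$ and $x_n=\alpha_n/2$. Then $x_{n+1}=0$ and $\eta_{n+1}=-1/2\in\mathcal{N}_{\mathcal{X}}(0)$, whereas $\widehat{\mathcal{N}}_{\mathcal{X}}(x_n)=\{0\}$. So any representation with a normal vector at $x_n$ forces the bracketed error $b_n$ to equal $-1/2$. With noisy gradients (e.g.\ i.i.d.\ $\pm1$ with positive mean and $\alpha_n=1/\sqrt{n}$), such re-entries into $0$ from $(0,\alpha_n/2]$ recur infinitely often, so $b_n$ does not tend to $0$.

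Your subsequence argument breaks at exactly this point. A limit $\bar\eta$ of $\eta_{n_k+1}$ lies in $\widehat{\mathcal{N}}_{\mathcal{X}}(\bar x)$, which can be strictly larger than every $\widehat{\mathcal{N}}_{\mathcal{X}}(x_{n_k})$, so no contradiction arises. Your fallback (``the perturbation tends to zero'') fails for the same reason.

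The paper sidesteps this by proving the $o(\alpha)$ expansion for frozen $(x_n,S_n)$ as the step size $\alpha\downarrow 0$ (Claims 3--6). With the base point fixed, closedness of the graph at $x_n$ does place every limit point $\eta$ of $\eta_{n+1}(\alpha)$ in $\mathcal{N}_{\mathcal{X}}(x_n)$. Likewise, limits $v$ of $(x_{n+1}(\alpha)-x_n)/\alpha$ lie in $\mathcal{T}_{\mathcal{X}}(x_n)$. The complementarity $\langle v,\eta\rangle=0$, together with $v=-\nabla^2R(x_n)^{-1}(y_n+\eta)$, then identifies $\eta$ as the unique minimizer of $(\zeta+y_n)^\top\nabla^2R(x_n)^{-1}(\zeta+y_n)$ over $\zeta\in\mathcal{N}_{\mathcal{X}}(x_n)$. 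Hence the whole family converges.

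This is the idea your nearest-point selection is missing. It also selects the specific normal vector for which $\nu_n$ in Corollary~\ref{TangentConeRepresentation} is a tangent-cone projection. An arbitrary $\tilde\eta_n$ does not satisfy the needed relation $\langle\eta_n,\nu_n\rangle=0$.

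Be aware that this expansion is pointwise, not uniform in $x_n$: in the example above it takes effect only once $\alpha<x_n$. If you want an estimate that holds uniformly along the trajectory, do not transport the normal vector to $x_n$ at all. Instead keep $\eta_{n+1}\in\widehat{\mathcal{N}}_{\mathcal{X}}(x_{n+1})$ with $\norm{x_{n+1}-x_n}\le\alpha_n\mathbf{G}/\sigma_R$. In other words, view the normal-cone term as lying in the enlargement $\bigcup_{\norm{z-x_n}\le\alpha_n\mathbf{G}/\sigma_R}\widehat{\mathcal{N}}_{\mathcal{X}}(z)$. That is a vanishing perturbation of a closed-graph, locally bounded map, of the kind that perturbed-solution arguments for Marchaud inclusions can absorb.
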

\begin{proof}
Fix a step-size $\alpha>0$ and consider the mirror descent update in \eqref{mirror_descent1}. The first-order optimality condition gives
\begin{equation}
\bigg\langle
y_n + \frac{1}{\alpha}\big(\nabla R(x_{n+1}) - \nabla R(x_n)\big),
\, x - x_{n+1}
\bigg\rangle
\geq 0,
\qquad \forall\, x \in \mathcal{X}.
\label{MirrorDescentIneq}
\end{equation}

For simplicity, we take $y_n = g(x_n,S_n)$. By the definition of the normal cone, this implies
\[
-
\Big(
y_n + \frac{1}{\alpha}\big(\nabla R(x_{n+1}) - \nabla R(x_n)\big)
\Big)
\in
\mathcal{N}_{\mathcal{X}}(x_{n+1}).
\]
Equivalently, there exists $\eta_{n+1}(\alpha)\in \mathcal{N}_{\mathcal{X}}(x_{n+1})$ such that
\begin{equation}
-
\Big(
y_n + \frac{1}{\alpha}\big(\nabla R(x_{n+1}) - \nabla R(x_n)\big)
\Big)
=
\eta_{n+1}(\alpha).
\label{eta-def}
\end{equation}

\medskip
\noindent
\textbf{Claim 1.}
There exists a constant $C>0$, independent of $\alpha$, such that
$
\|\eta_{n+1}(\alpha)\| \leq C.
$

\noindent
\emph{Proof of Claim 1.}
Using the $\sigma_R$-strong convexity of $R$, we have
\[
\langle \nabla R(x_{n+1}) - \nabla R(x_n),\, x_{n+1} - x_n \rangle
\geq
\sigma_R \|x_{n+1} - x_n\|^2.
\]
Choosing $x=x_n$ in \eqref{MirrorDescentIneq} yields
\[
\langle
\nabla R(x_{n+1}) - \nabla R(x_n),\, x_{n+1} - x_n
\big\rangle
\leq
\alpha \langle y_n, x_n - x_{n+1} \rangle.
\]
Combining the two inequalities gives
\[
\sigma_R \|x_{n+1} - x_n\|^2
\leq
\alpha \langle y_n, x_n - x_{n+1} \rangle.
\]
Applying now the Cauchy--Schwarz inequality gives,
\[
\sigma_R \|x_{n+1} - x_n\|^2
\leq
\alpha \|y_n\|_\ast\,\|x_{n+1} - x_n\|.
\]
Hence,
\begin{equation}
\|x_{n+1} - x_n\|
\leq
\frac{\alpha}{\sigma_R}\,\|y_n\|_\ast.
\label{eqna}
\end{equation}

Now taking norms in \eqref{eta-def},
\[
\|\eta_{n+1}(\alpha)\|_\ast
\leq
\|y_n\|_\ast
+
\frac{1}{\alpha}
\|\nabla R(x_{n+1}) - \nabla R(x_n)\|_\ast.
\]
Using $L$-smoothness of $R$,
$
\|\nabla R(x_{n+1}) - \nabla R(x_n)\|_\ast
\leq
L\|x_{n+1} - x_n\|.
$ 
Substituting \eqref{eqna}, we obtain 
${\displaystyle
\|\eta_{n+1}(\alpha)\|_\ast
\leq
\Big(1+\frac{L}{\sigma_R}\Big)\|y_n\|_\ast.
}$ 
The claim follows. 
\hfill $\square$

\medskip
\noindent
\textbf{Claim 2.}
The iterate admits the expansion
\[
x_{n+1}
=
x_n
-
\alpha \nabla^2 R(x_n)^{-1}\big(y_n+\eta_{n+1}(\alpha)\big)
+
o(\alpha).
\]

\medskip
\noindent
\emph{Proof of Claim 2.}
From \eqref{eta-def},
\begin{equation}
\nabla R(x_{n+1})
=
\nabla R(x_n)-\alpha y_n-\alpha \eta_{n+1}(\alpha).
\label{eq:Taylor}
\end{equation}
Since $R:\mathbb{R}^d\to\mathbb{R}$ is $C^2$ and strongly convex, the gradient mapping $\nabla R : \mathbb{R}^d \to \mathbb{R}^d$ is injective in $\mathbb{R}^d$, and hence it admits an inverse in its range $\nabla R(\mathbb{R}^d)$ \cite{rockafellar-1970a}. 
Applying $(\nabla R)^{-1}$ to \eqref{eq:Taylor} and performing a first-order Taylor expansion of $(\nabla R)^{-1}$, together with the inverse function theorem, yields
\begin{equation}
x_{n+1}
=
x_n
-
\alpha \nabla^2 R(x_n)^{-1}\big(y_n+\eta_{n+1}(\alpha)\big)
+
o(\alpha).
\label{rumso}
\end{equation}
\hfill $\square$
\noindent
\\ \textbf{Claim 3.}
There exists a sequence $\{\alpha_k\}$ with $\alpha_k\downarrow 0$ such that
\[
\lim_{k\to\infty}\eta_{n+1}(\alpha_k)=\eta
\quad \text{for some } \eta \in \mathcal{N}_{\mathcal{X}}(x_n).
\]

\medskip
\noindent
\emph{Proof of Claim 3.}
Define
\begin{equation}
v(\alpha):=\frac{x_{n+1}-x_n}{\alpha} \leq \frac{1}{\sigma_R}\|y_n\|_\ast, \quad \text{from \eqref{eqna} }
\label{v-alpha}
\end{equation}
 which shows that $\{v(\alpha)\}_{\alpha>0}$ is uniformly bounded. Consequently, for any  sample path, there exists a sequence $\{\alpha_k\}$ with $\alpha_k\downarrow 0$ such that 
$
\lim_{k\to\infty} v(\alpha_k)=v \in \mathbb{R}^d.
$

Next, the update \eqref{mirror_descent1} ensures that $x_{n+1}= x_n$ in the limit $\alpha\downarrow 0$, with $x_{n+1}\in\mathcal{X}$. Therefore,
$
v \in \mathcal{T}_{\mathcal{X}}(x_n).
$ 
By Claim~1, the sequence $\{\eta_{n+1}(\alpha_k)\}$ is bounded. Hence, possibly along a further subsequence $\{k_m\}\subset \{k\}$, there exists $\eta$ such that
$
\lim_{k_m\to\infty}\eta_{n+1}(\alpha_{k_m})=\eta.
$ 
Since $\eta_{n+1}(\alpha_{k_m})\in \mathcal{N}_{\mathcal{X}}(x_{n+1}(\alpha_{k_m}))$ and $x_{n+1}(\alpha_{k_m})= x_n$ in the limit $\alpha_{k_m}\downarrow 0$, the closedness of the normal cone mapping implies $\eta \in \mathcal{N}_{\mathcal{X}}(x_n).$ 
\hfill $\square$

\medskip
\noindent
\textbf{Claim 4.}
The limiting vectors $v$ and $\eta$ satisfy
$
\langle v,\eta\rangle = 0.
$

\noindent
\emph{Proof of Claim 4.}
From the construction,
$
\eta_{n+1}(\alpha_k)\in \mathcal{N}_{\mathcal{X}}\big(x_{n+1}(\alpha_k)\big).
$ 
Thus, for all $y\in\mathcal{X}$,
\[
\langle \alpha_k \eta_{n+1}(\alpha_k),\, y - x_{n+1}(\alpha_k) \rangle \leq 0.
\]
Using
$
x_{n+1}(\alpha_k)=x_n+\alpha_k v(\alpha_k),
$ 
we obtain
$
\langle \alpha_k \eta_{n+1}(\alpha_k),\, y - x_n - \alpha_k v(\alpha_k) \rangle \leq 0.
$ 
Dividing by $\alpha_k$ and choosing $y=x_n$ gives
$
\langle \eta_{n+1}(\alpha_k),\, v(\alpha_k) \rangle \geq 0.
$ 
Passing to the limit as $\alpha_k\downarrow 0$ yields $\langle \eta, v \rangle \geq 0.$
On the other hand, since $\eta \in \mathcal{N}_{\mathcal{X}}(x_n)$ and $v \in \mathcal{T}_{\mathcal{X}}(x_n)$, $\langle \eta, v \rangle \leq 0.$
Combining the two inequalities gives $\langle v,\eta\rangle = 0.$ 
\hfill $\square$
\medskip

\noindent
\textbf{Claim 5.}
The vector $\eta$ satisfies
\begin{equation}
\eta
=
\arg\min_{\zeta \in \mathcal{N}_{\mathcal{X}}(x_n)}
(\zeta + y_n)^\top \nabla^2 R(x_n)^{-1} (\zeta + y_n).
\label{Claim5}
\end{equation}

\medskip
\noindent
\emph{Proof of Claim 5.}
Since $\nabla^2 R(x_n)^{-1}$ is positive definite, \eqref{Claim5} is equivalent to
\begin{equation}
\langle \nabla^2 R(x_n)^{-1}(\eta+y_n),\, \zeta-\eta \rangle \geq 0
\quad \forall\, \zeta \in \mathcal{N}_{\mathcal{X}}(x_n).
\label{eq:claim5}
\end{equation}

    Note that from Claim~2,
$
v = -\nabla^2 R(x_n)^{-1}(y_n+\eta),
$
and from Claim~4, $\langle v,\eta\rangle=0$. Substituting this to the LHS of \eqref{eq:claim5} yields 
\begin{equation*}
    \langle \nabla^2 R(x_n)^{-1}(\eta+y_n),\, \zeta-\eta \rangle =  \langle - \nu,\, \zeta  \rangle \geq 0,
\end{equation*}
which proves the claim.
\hfill $\square$

\medskip
\noindent
\textbf{Claim 6.}
The limit
$
\lim_{\alpha\downarrow 0}\eta_{n+1}(\alpha)=\eta
$ 
exists, where $\eta$ satisfies \eqref{Claim5}. 
Moreover,
\begin{equation}
\|\eta\|_\ast \leq \Big(1+\frac{L}{\sigma_R}\Big)\|y_n\|_\ast.
\label{bound}
\end{equation}

\medskip
\noindent
\emph{Proof of Claim 6.}
From Claim~1, the family $\{\eta_{n+1}(\alpha)\}$ is bounded. Hence, every sequence $\{\eta_{n+1}(\alpha_k)\}$ with $\alpha_k\downarrow 0$ admits a convergent subsequence. 
By Claim~5, any such limit must satisfy \eqref{Claim5}, which uniquely characterizes $\eta$. Therefore, all convergent subsequences have the same limit, and hence
$
\lim_{\alpha\downarrow 0}\eta_{n+1}(\alpha)=\eta.
$ 
The bound \eqref{bound} follows directly from Claim~1. The result then follows from \eqref{rumso}.
\end{proof}
Proposition~\ref{Stochastic Projected Proposition} provides a useful structural interpretation.  One important point to note is that although Proposition~\ref{Stochastic Projected Proposition} shares a similar conceptual foundation with \cite{paul2025convergence}, we develop a detailed self-contained proof based on a different analytical approach. In particular, this approach allows us to represent stochastic mirror descent from the viewpoint of stochastic approximation, which subsequently enables the use of dynamical systems techniques to establish almost sure convergence for any general Lipschitz functions.

 The next corollary will play an important role in the subsequent analysis.
\begin{corollary}\label{TangentConeRepresentation}
Under the same assumptions as in Proposition~\ref{Stochastic Projected Proposition}, the iterates $\{x_n\}$ generated by \eqref{mirror_descent1} admit the equivalent representation
\[
x_{n+1}-x_n=\alpha_n(\nu_n+b_n),
\]
where
\begin{equation}
      \nu_n  =  \argmin_{\nu \in \mathcal{T}_{\mathcal X}(x_n)}
    \left\{
    (\nu+\nabla^2 R(x_n)^{-1}y_n)^\top
    \nabla^2 R(x_n)
    (\nu+\nabla^2 R(x_n)^{-1}y_n)
    \right\}.
    \label{Tangent Cone Representation}
\end{equation}
Here, $y_n=g(x_n,S_n)\in G(x_n,S_n)$ and $b_n=o(\alpha_n)$.
\end{corollary}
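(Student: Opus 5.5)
We start from Proposition~\ref{Stochastic Projected Proposition}, and more precisely from Claims~2, 5 and~6 in its proof. These give
\[
x_{n+1}-x_n=-\alpha_n\nabla^2R(x_n)^{-1}(y_n+\eta)+o(\alpha_n),
\]
where $\eta$ is the unique minimizer in \eqref{Claim5} over the normal cone $\mathcal{N}_{\mathcal{X}}(x_n)$. It therefore suffices to show that
\[
\nu_n:=-\nabla^2R(x_n)^{-1}(y_n+\eta)
\]
coincides with the minimizer in \eqref{Tangent Cone Representation}. Once this is done, $b_n$ is simply the $o(\alpha_n)$ remainder divided by $\alpha_n$ (relabelled). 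Write $M=\nabla^2R(x_n)$, which is positive definite by Assumption~\ref{Strong Convexity}, and $w=-M^{-1}y_n$. The tangent-cone problem is then the $M$-norm projection of $w$ onto the closed convex cone $\mathcal{T}_{\mathcal X}(x_n)$. The normal-cone problem \eqref{Claim5} is the $M^{-1}$-norm projection of $-y_n$ onto $\mathcal{N}_{\mathcal X}(x_n)$. This is a Moreau decomposition in the inner product $\langle u,v\rangle_M=u^\top Mv$.

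\textbf{Step 1: characterize the tangent minimizer.} Since $\mathcal{T}_{\mathcal X}(x_n)$ is a closed convex cone and the objective is strictly convex, the minimizer exists and is unique. A point $\nu$ is optimal if and only if
\begin{itemize}
\item[(i)] $\nu\in\mathcal{T}_{\mathcal X}(x_n)$;
\item[(ii)] $\langle M(\nu-w),\nu\rangle=0$;
\item[(iii)] $\langle M(\nu-w),\tau\rangle\ge0$ for all $\tau\in\mathcal{T}_{\mathcal X}(x_n)$.
\end{itemize}
These are the standard cone-projection optimality conditions, obtained by testing with $\tau=0$, $\tau=2\nu$, and then general $\tau$. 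Condition (iii) says exactly that $-M(\nu-w)\in\mathcal{T}_{\mathcal X}(x_n)^\circ=\mathcal{N}_{\mathcal X}(x_n)$.

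\textbf{Step 2: verify the conditions for $\nu_n$.} Note that $M(\nu_n-w)=-(y_n+\eta)+y_n=-\eta$.
\begin{itemize}
\item Condition (iii) reduces to $\langle\eta,\tau\rangle\le0$ for all tangent $\tau$. This holds because $\eta\in\mathcal{N}_{\mathcal X}(x_n)$ and the normal cone is the polar of the tangent cone.
\item Condition (ii) reads $\langle\eta,\nu_n\rangle=0$. This is Claim~4, since $\nu_n=v$ by Claims~2 and~6.
\item Condition (i), $\nu_n=v\in\mathcal{T}_{\mathcal X}(x_n)$, was established inside Claim~3. There $v$ is a limit of the difference quotients $(x_{n+1}(\alpha)-x_n)/\alpha$ with $x_{n+1}(\alpha)\in\mathcal X$, so it lies in the tangent cone. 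By Claim~6 the limit is in fact independent of the subsequence, so $v$ is well defined.
\end{itemize}
By uniqueness, $\nu_n$ equals the minimizer in \eqref{Tangent Cone Representation}.

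\textbf{Main obstacle.} The delicate point is Step~2, specifically identifying the $v$ from Claims~3--4 with $-M^{-1}(y_n+\eta)$ from the expansion in Claim~2. This needs the remainder in Claim~2 to be genuinely $o(\alpha)$ uniformly, so that $v(\alpha)\to-M^{-1}(y_n+\eta)$ along the full limit $\alpha\downarrow0$. It also needs the fact that the normal cone at $x_{n+1}(\alpha)$ converges appropriately to that at $x_n$.

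I would handle this by writing $\nabla R(x_{n+1})-\nabla R(x_n)=\bar M_\alpha(x_{n+1}-x_n)$, where $\bar M_\alpha=\int_0^1\nabla^2R(x_n+t(x_{n+1}-x_n))\,dt$. Continuity of $\nabla^2R$ together with \eqref{eqna} gives $\bar M_\alpha\to M$. Then
\[
v(\alpha)=-\bar M_\alpha^{-1}\bigl(y_n+\eta_{n+1}(\alpha)\bigr)\to-M^{-1}(y_n+\eta)
\]
by Claim~6, and this makes the identification rigorous.
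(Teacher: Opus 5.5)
Your proof is correct and follows the paper's own route. You set $\nu_n=-\nabla^2R(x_n)^{-1}(y_n+\eta)$ and verify the optimality conditions of the $\nabla^2R(x_n)$-weighted projection onto $\mathcal{T}_{\mathcal X}(x_n)$ using $\eta\in\mathcal{N}_{\mathcal X}(x_n)$ and $\langle\eta,\nu_n\rangle=0$ from Claim~4. Your explicit feasibility check $\nu_n\in\mathcal{T}_{\mathcal X}(x_n)$ and the averaged-Hessian identification $v=\nu_n$ supply details the paper leaves implicit.

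One slip, which you inherit from the paper's bookkeeping in Proposition~\ref{Stochastic Projected Proposition}, is the ``relabelling''. Dividing an $o(\alpha_n)$ remainder by $\alpha_n$ gives $b_n=v(\alpha_n)-\nu_n=o(1)$ for fixed $(x_n,y_n)$, not $o(\alpha_n)$. For non-quadratic $R$ at an interior point this difference can be of exact order $\alpha_n$, so the stated order cannot be obtained this way.
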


\begin{proof}
By Proposition~\ref{Stochastic Projected Proposition}, there exist
$y_n=g(x_n,S_n)$ and $\eta_n\in\mathcal{N}_{\mathcal{X}}(x_n)$ such that
\[
x_{n+1}=x_n-\alpha_n\big(y_n+\eta_n-b_n\big).
\]
Define
\begin{equation}
\nu_n=-\nabla^2 R(x_n)^{-1}y_n-\nabla^2 R(x_n)^{-1}\eta_n.
\label{nundefinition}
\end{equation}

Therefore, $\nu_n$ solves \eqref{Tangent Cone Representation} if and only if the first-order optimality condition holds, that is,
\[
\left\langle
\nabla^2 R(x_n)
\big(
\nu_n +\nabla^2 R(x_n)^{-1}y_n
\big),
\,
\nu_1-\nu_n
\right\rangle
\geq 0,
\qquad
\forall \; \nu_1 \in \mathcal{T}_{\mathcal X}(x_n).
\]

Substituting \eqref{nundefinition} into the left-hand side of the above inequality yields
\[
\left\langle
-\eta_n,
\,
\nu_1-\nu_n
\right\rangle,
\qquad
\forall \; \nu_1 \in \mathcal{T}_{\mathcal X}(x_n).
\]

Note that $\inprod{\eta_n}{\nu_n}=0$ (the proof is similar to Claim-$4$ of Proposition~\ref{Stochastic Projected Proposition}). Therefore, by the definition of the normal cone and the tangent cone, we obtain
\[
\left\langle
-\eta_n,
\,
\nu_1-\nu_n
\right\rangle
\geq 0,
\qquad
\forall \; \nu_1 \in \mathcal{T}_{\mathcal X}(x_n).
\]
This concludes the proof.
\end{proof}
We are now ready to state the main theorem of this section.
\begin{theorem}
The sequence of iterates $\{x_n\}$ generated by the stochastic mirror descent algorithm in \eqref{mirror_descent1} converges almost surely to the set of stationary points $S$.
\end{theorem}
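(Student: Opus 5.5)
We will use the stochastic approximation framework with set-valued, iterate-dependent Markov noise of \cite{yaji2020stochastic}. The starting point is the representation of Proposition~\ref{Stochastic Projected Proposition}:
\[
x_{n+1}-x_n \in -\alpha_n H_2(x_n,S_n), \qquad b_n=o(\alpha_n).
\]
We drop the dependence on $b_n$ by absorbing it into a vanishing perturbation term. We then define the Markov-averaged limiting map
\[
\widehat H(x) := \bigcup_{\mu}\int_{\mathcal{S}} \nabla^2 R(x)^{-1}\Big(G(x,s)+\widehat{\mathcal N}_{\mathcal X}(x)\Big)\,\mu(ds),
\]
where the union is over the invariant measures $\mu$ of $\Pi(x,\cdot)$. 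By the ergodicity assumption this is just $\mu_x$. By Assumption~\ref{assumpton3}, the Aumann integral of $G(x,\cdot)$ is contained in $\partial f(x)$. Hence $\widehat H(x)\subseteq \nabla^2R(x)^{-1}\big(\partial f(x)+\widehat{\mathcal N}_{\mathcal X}(x)\big)$.

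\textbf{Step 1: verify the hypotheses of the Yaji--Bhatnagar theorem.} These are:
\begin{enumerate}
\item Marchaud property of $H_2$. Nonemptiness, convexity and compactness come from Assumption~\ref{Rhsn} together with the compactness of the truncated normal cone. Boundedness by $(1+\tfrac{1}{\sigma_R}\sup_x\|\nabla^2R(x)^{-1}\|)(2+L/\sigma_R)\mathbf G$ comes from Assumption~\ref{Assumption 3}. The closed graph comes from Assumption~\ref{Assumption 5}, closedness of $x\mapsto\mathcal N_{\mathcal X}(x)$, and continuity of $\nabla^2R^{-1}$ on compact $\mathcal X$.
\item The step sizes satisfy Assumption~\ref{assumptionstepsize}.
\item The iterates are bounded, trivially, since $x_n\in\mathcal X$ is compact.
\item $b_n\to 0$.
\item Tightness of the state process and the continuity and Feller-type properties of $\Pi$, which I would take as implicit in the ergodicity assumption.
\end{enumerate}
The conclusion of the theorem is then that, almost surely, the linear interpolation of $\{x_n\}$ is an asymptotic pseudotrajectory of the differential inclusion $\dot x\in -\widehat H(x)$. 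Consequently its limit set is internally chain transitive for this inclusion.

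\textbf{Step 2: show that $f$ is a Lyapunov function for $S$.} We need that $f$ is strictly decreasing along solutions off $S$. Corollary~\ref{TangentConeRepresentation} suggests working with the equivalent tangent-cone form of the inclusion: $\dot x=\nu$, where $\nu$ is the $\nabla^2R(x)$-metric projection of $-\nabla^2R(x)^{-1}\zeta$ onto $\mathcal T_{\mathcal X}(x)$ for some $\zeta\in\partial f(x)$. The optimality conditions of this projection give
\[
\langle \zeta,\nu\rangle=-\nu^\top\nabla^2R(x)\nu\le -\sigma_R\|\nu\|^2 .
\]
Since $f$ is Lipschitz on $\mathcal X$, it is path-differentiable along absolutely continuous curves; here I would invoke the chain rule for Clarke-regular or path-differentiable functions, which I would add as a mild assumption if needed. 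The chain rule gives $\frac{d}{dt}f(x(t))=\langle \zeta,\dot x\rangle\le -\sigma_R\|\dot x\|^2$ for every $\zeta\in\partial f(x(t))$. The right-hand side is zero only if $\nu=0$. And $\nu=0$ means $-\zeta\in\mathcal N_{\mathcal X}(x)$, i.e.\ $x\in S$. Thus $f$ is strictly decreasing off $S$. Combining this with the assumption that $f(S)$ has empty interior (a Sard-type condition), Proposition~3.27 of Bena\"im--Hofbauer--Sorin gives that every internally chain transitive set lies in $S$. This yields $x_n\to S$ almost surely.

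\textbf{Main obstacle.} I expect the hardest step to be the Markov-noise averaging in Step 1. The kernel is iterate-dependent and $G$ is set-valued, so we must show that the empirical occupation measures of $S_n$ along the interpolated time scale converge to invariant measures of $\Pi(x,\cdot)$ at the limiting $x$, and that the Aumann integral of $H_2$ against such measures is what appears on the right-hand side.

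My first attempt would be to cite the main theorem of \cite{yaji2020stochastic} directly, after checking its standing assumptions: continuity of $(x,s)\mapsto\Pi(x,s)$ in the weak topology and tightness of $\{S_n\}$, the latter automatic if $\mathcal S$ is compact. The non-differentiability of $f$ in the Lyapunov step is the secondary difficulty, and it is handled by the chain-rule argument above.
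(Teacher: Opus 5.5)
Your argument has the same skeleton as the paper's. Both use the representation of Proposition~\ref{Stochastic Projected Proposition}, the Markov-averaged inclusion of Yaji--Bhatnagar, and the reduction via Assumption~\ref{assumpton3} to $\dot x\in-\nabla^2R(x)^{-1}(\partial f(x)+\widehat{\mathcal N}_{\mathcal X}(x))$. Both then pass to the $\nabla^2R$-metric projected inclusion and use $f$ as a Lyapunov function through $\langle\zeta,\nu\rangle=-\nu^\top\nabla^2R(x)\nu$.

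The difference is the closing step. The paper reduces matters to the global attractor of the projected inclusion. It then shows, with the LaSalle-type invariance principle of Cort\'es, that every Carath\'eodory solution converges to $S$, and invokes Theorem~6.7 of Yaji--Bhatnagar. You instead use that the limit set is internally chain transitive and apply Proposition~3.27 of Bena\"im--Hofbauer--Sorin. Your route is the more robust one, for two reasons. Trajectory-wise convergence to $S$ does not by itself place an internally chain transitive set inside $S$. And in the nonconvex case the global attractor typically contains orbits connecting stationary points, so it is not contained in $S$. The empty-interior condition on $f(S)$ is exactly what closes this gap.

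The price is that you prove the theorem under hypotheses the statement does not contain. You should present them as hypotheses, not as things to add ``if needed''. In particular, your remark that a Lipschitz $f$ is automatically path-differentiable is false. For general Lipschitz functions, the subgradient flow and the subgradient method can produce bounded trajectories that fail to detect any Clarke-critical point; these are the pathological examples of Daniilidis and Drusvyatskiy. So some chain rule (Clarke regularity, definability, \dots) is genuinely required.

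The paper relies on the same fact tacitly: its Lie derivative is written with $\nabla f$, and Cort\'es's invariance principle is stated for regular Lyapunov functions. So this is not a defect of your approach relative to the paper's. For convex $f$ both extra conditions are automatic, since $f$ is regular and $f(S)=\{f^\ast\}$; that half of the statement is covered outright.

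Similarly, weak continuity of $(x,s)\mapsto\Pi(x,s)$ and tightness of $\{S_n\}$ do not follow from ergodicity at each fixed $x$. They are standing assumptions of Yaji--Bhatnagar that the paper also leaves implicit.
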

\begin{proof}
The proof proceeds through several steps.

\medskip
\noindent
\textbf{Step 1: Continuous-time interpolation and limiting dynamics.}

We begin by constructing a continuous-time interpolation of the discrete iterates $\{x_n\}$.  
Define the time sequence $\{t_n\}$ by
$
t_0 = 0$,
$t_n = \sum_{k=0}^{n-1} \alpha_k$ and set $I_n = [t_n, t_{n+1}], n\geq 0$. 
We then obtain a continuous time process $\bar{x}(t),t\geq 0$ from $\{x_n\}$ as
\[
\bar{x}(t) 
= 
x_n 
+ 
\frac{t - t_n}{t_{n+1} - t_n}\,(x_{n+1} - x_n),
\qquad \forall\, t \in I_n, n\geq 0.
\]

From Proposition~\ref{Stochastic Projected Proposition}, the mirror descent iterates in \eqref{mirror_descent1} admit the equivalent stochastic approximation form with Markov noise:
\[
x_{n+1} - x_n 
\in 
- \alpha_n \nabla^2 R(x_n)^{-1} 
\Big(
G(x_n, S_n) 
+ \widehat{\mathcal{N}}_{\mathcal{X}}(x_n) 
+ o(\alpha_n)
\Big).
\]

Now, in view of Assumptions~\ref{Rhsn}, \ref{Assumption 3}, and \ref{Assumption 5}, the set-valued map
$
G : \mathcal{X} \times \mathcal{S} \rightrightarrows \mathbb{R}^d
$ 
is Marchaud. Consequently, by \cite{yaji2020stochastic}, the asymptotic behavior of the interpolated trajectory $\bar{x}(t),t\geq 0$ can be characterized through the stability analysis of the following differential inclusion:
\begin{equation}
\begin{aligned}
\dot{x}(t) & 
\in
- \int_{\mathcal{S}} 
\nabla^2 R(x)^{-1} 
\Big(
G(x,s') + \widehat{\mathcal{N}}_{\mathcal{X}}(x)
\Big)
\, \mu_x(ds')
\\ & \subseteq - \nabla^2 R(x)^{-1}  \left( \int\limits_{\mathcal{S}} G(x,s') \mu_x (ds') + \widehat{\mathcal{N}}_\mathcal{X} (x) \right).
\end{aligned}
\label{DI1}
\end{equation}
More precisely, if $L(\bar{x})$ denotes the limit set of the trajectory $\bar{x}(t)$, $t\geq 0$, then
$
L(\bar{x}) \subseteq A,
$ 
where $A$ is the global attractor of the differential inclusion \eqref{DI1}. 
We emphasize that the integral in \eqref{DI1} is understood in the sense of Aumann integration, as defined in Assumption~\ref{assumpton3}.  
Furthermore, by Assumption~\ref{assumpton3}, any Carath\'eodory solution of \eqref{DI1} is also a solution of the reduced differential inclusion
\begin{equation}
\dot{x}(t)
\in -
\nabla^2 R(x)^{-1} 
\Big(
\partial f(x) + \widehat{\mathcal{N}}_{\mathcal{X}}(x)
\Big)
\,.
\label{ODIST}
\end{equation}

Let $A_1$ denote the global attractor of the differential inclusion \eqref{ODIST}. Then it follows immediately that
$
A \subseteq A_1.
$ 
Therefore, to characterize the limit set of $\bar{x}(t)$, it suffices to show that
$
A_1 \subseteq S.
$ 
This will imply that
$
L(\bar{x}(t)) \subseteq S.
$ 

\medskip
\noindent
\textbf{Step 2:}
In this step, we conclude that any Carath\'eodory solution of \eqref{ODIST} is also a solution of the following projected differential inclusion:
{\small 
\begin{equation}
\dot{x}(t)
\in
\mathcal{P}^{x(t)}_{\mathcal{T}_{\mathcal{X}}(x(t))}
\Big(
- \nabla^2 R(x(t))^{-1} \partial f(x(t))
\Big)
\label{PDSdeddi}
\end{equation}
}
and vice versa.
The equation \eqref{PDSdeddi} represents the projected differential inclusion in non-Euclidean domain and the definition of the RHS of \eqref{PDSdeddi} is given below.
{\small 
\begin{equation}
\begin{aligned}
& \dot{x}(t)
 \\ \in  &
\left\{
\argmin_{\nu \in \mathcal{T}_{\mathcal{X}}(x(t))}
\big(\nu + \nabla^2 R(x(t))^{-1} g\big)^\top
\nabla^2 R(x(t))
\big(\nu + \nabla^2 R(x(t))^{-1} g\big)
\;\middle|\;
g \in \partial f(x(t))
\right\}.
\label{PDSdeddi1}
\end{aligned}
\end{equation}
}
This claim can be established using arguments similar to those employed in Corollary~\ref{TangentConeRepresentation}.A related proof in a  setting with any constraint sets but with single-valued maps can be found in \cite{hauswirth2021projected}. We omit the details for brevity.

\medskip
\noindent 
\textbf{Step 3: Final conclusion.}
From Step 2, we conclude that the stability analysis of the differential inclusion in \eqref{ODIST} can be equivalently carried out using the projected differential inclusion in \eqref{PDSdeddi}. 

In Theorem~\ref{stability them}, we show that any Carath\'eodory solution of \eqref{PDSdeddi} asymptotically converges to the set of stationary points $\mathrm{S}$. Consequently, by applying Theorem~6.7 from \cite{yaji2020stochastic}, it follows that 
$
L(\bar{x}) \subseteq \mathrm{S}.
$
\end{proof}

\begin{remark}
The projected differential inclusion in the non-Euclidean setting given in \eqref{PDSdeddi} can be viewed as a natural generalization of the projected dynamical systems in the Euclidean setting \cite{nagurney2012projected}. In particular, the geometry is induced by the Riemannian metric associated with $\nabla^2 R(x)$, and dynamics are driven by a set-valued map. 
For a detailed analysis of projected dynamical systems in non-Euclidean domains, we refer to \cite{hauswirth2021projected}.
\end{remark}

In the next theorem, we show that the set of stationary points coincides with the set of equilibrium points of the projected differential inclusion \eqref{PDSdeddi}. Moreover, every Carath\'eodo-\\ry solution of \eqref{PDSdeddi} asymptotically converges to this set.

\begin{theorem}
A point $x^\star \in \mathcal{X}$ is a stationary point of \eqref{eq:main_problem} if and only if it is an equilibrium point of \eqref{PDSdeddi}. Moreover, every Carath\'eodory solution of \eqref{PDSdeddi} asymptotically converges to the set of stationary points of \eqref{eq:main_problem}.
\label{stability them}
\end{theorem}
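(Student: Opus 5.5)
The proof has two parts: identify equilibria with stationary points, then show convergence using $f$ as a Lyapunov function.

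\textbf{Equilibria versus stationary points.} Fix $x^\star$ and $g \in \partial f(x^\star)$, and write $M := \nabla^2 R(x^\star) \succ 0$. Let $\nu(g)$ be the minimizer in \eqref{PDSdeddi1}. It is the $M$-metric projection of $-M^{-1}g$ onto the closed convex cone $\mathcal{T}_{\mathcal X}(x^\star)$, so it is unique. By the optimality condition for projection onto a cone (Moreau decomposition in the $M$-inner product), $\nu(g)=0$ holds iff
\[
\langle M(0+M^{-1}g),\, \nu_1\rangle \ge 0 \quad \text{for all } \nu_1 \in \mathcal{T}_{\mathcal X}(x^\star),
\]
that is, iff $\langle g, \nu_1\rangle \ge 0$ on the tangent cone. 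Since $\mathcal{X}$ is convex, $y - x^\star \in \mathcal{T}_{\mathcal X}(x^\star)$ for every $y \in \mathcal X$, and the tangent cone is the closure of the cone generated by these directions. Hence the condition is equivalent to $\langle g, y-x^\star\rangle \ge 0$ for all $y\in\mathcal X$, i.e.\ $x^\star\in S$. So $0$ belongs to the right-hand side of \eqref{PDSdeddi} at $x^\star$ iff $x^\star\in S$.

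\textbf{Lyapunov decrease.} For the convergence claim I use $V=f$. Since $f$ is Lipschitz, it is not necessarily regular, so I need a chain rule for Clarke subdifferentials along absolutely continuous curves. I will assume, as is standard in this literature (Davis--Drusvyatskiy et al.), that $f$ is path-differentiable, e.g.\ Whitney stratifiable or subdifferentially regular. Then for a.e.\ $t$,
\[
\frac{d}{dt} f(x(t)) = \langle \zeta, \dot x(t)\rangle \quad \text{for all } \zeta\in\partial f(x(t)).
\]

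For a.e.\ $t$ we have $\dot x(t)=\nu(g)$ for some $g\in\partial f(x(t))$. With $M=\nabla^2 R(x(t))$, the projection properties give $\langle M(\nu+M^{-1}g),\nu\rangle = 0$ (cone orthogonality, as in Claim 4). Therefore
\[
\langle g,\nu\rangle = -\nu^\top M\nu \le -\sigma_R\|\nu\|^2 .
\]
Taking $\zeta=g$ in the chain rule yields
\[
\frac{d}{dt} f(x(t)) \le -\sigma_R\|\dot x(t)\|^2 .
\]
So $f$ is nonincreasing along solutions and strictly decreasing unless $\dot x=0$.

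\textbf{Convergence to $S$.} Since $\mathcal X$ is compact, $f$ is bounded below, so $\int_0^\infty\|\dot x\|^2\,dt<\infty$. I then apply a LaSalle-type invariance principle for Marchaud differential inclusions. The $\omega$-limit set $\Omega$ is nonempty, compact and invariant, and $f$ is constant on $\Omega$ (equal to $\lim f(x(t))$). Any solution starting in $\Omega$ therefore has $\frac{d}{dt}f=0$, hence $\dot x=0$ a.e.
To deduce $\Omega\subseteq S$, I argue by contradiction. Suppose $p\in\Omega\setminus S$. The first part shows $0\notin \mathcal{P}(\cdot)$ at $p$. I need a quantitative version: the map $x\mapsto\min_{g\in\partial f(x)}\nu(g)^\top\nabla^2R(x)\nu(g)$ should be lower semicontinuous, hence bounded below by some $c>0$ near $p$. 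A solution near $p$ then decreases $f$ by a definite amount each time it passes through that neighbourhood. This contradicts $f(x(t))$ converging while $x(t)$ returns to the neighbourhood infinitely often.

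\textbf{Main obstacle.} This lower semicontinuity is the step I expect to be hardest. Tangent cones need not vary upper semicontinuously, so the projection onto them is only outer semicontinuous. I would handle this by working with the equivalent normal-cone form \eqref{ODIST}, whose right-hand side has closed graph, because $\partial f$ and $\mathcal{N}_{\mathcal X}$ are outer semicontinuous and $\widehat{\mathcal N}$ is bounded. Combined with the equivalence from Step~2 and the Marchaud property, this gives that limits of near-stationary velocities are stationary, which yields the contradiction.
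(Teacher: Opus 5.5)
\textbf{Overall.} Your argument is correct, with one caveat: you prove the convergence claim under an added path-differentiability hypothesis. That hypothesis is also what the paper's own proof tacitly needs.

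\textbf{Equivalence of equilibria and stationary points.} This half follows the paper. Both arguments use the first-order condition at $\nu=0$ for the $\nabla^2R(x^\star)$-metric projection onto $\mathcal{T}_{\mathcal X}(x^\star)$. You additionally justify the passage between $\langle g,v\rangle\ge 0$ on the tangent cone and $\langle g,y-x^\star\rangle\ge 0$ on $\mathcal{X}$, which the paper leaves implicit.

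\textbf{Convergence: how the routes differ.} The paper writes the Lie derivative of $V=f-f^\ast$ as $\langle\nabla f(x),\nu\rangle$. It infers $\frac{d}{dt}f(x(t))\le 0$ from $\langle g,\nu\rangle=-\nu^\top\nabla^2R(x)\nu$ and then cites Cort\'es's nonsmooth LaSalle principle. That passes over exactly the issue you isolate. We only know $\dot x=\nu(g)$ for some unspecified $g\in\partial f(x)$, so a chain rule valid for every Clarke subgradient is needed. Moreover, the cited invariance principle is stated for regular Lyapunov functions and for upper semicontinuous, convex-valued right-hand sides. \eqref{PDSdeddi} need not have these properties, since the projection onto $\mathcal{T}_{\mathcal X}(x)$ jumps at the boundary.

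\textbf{Your added hypothesis.} Path-differentiability is therefore not a weakness relative to the paper. It makes explicit, in a form more general than regularity, what the paper uses. Some hypothesis of this kind cannot be dropped. Daniilidis and Drusvyatskiy (on pathological subgradient dynamics) give Lipschitz functions with bounded Clarke subgradient trajectories that do not approach critical points. Placing such a trajectory inside a large ball $\mathcal{X}$ with $R=\tfrac12\norm{\cdot}_2^2$ violates the second claim as stated. What your route buys is a self-contained argument in place of a citation whose hypotheses are never checked. It rests on the quantitative dissipation $\frac{d}{dt}f\le-\sigma_R\norm{\dot x}^2$, with limit sets taken for the Marchaud form \eqref{ODIST}, where closed graph and convexity actually hold.

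\textbf{Your ``main obstacle'' is avoidable.} Once $\Omega$ is weakly invariant for \eqref{ODIST} and $f$ is constant on it, take a solution through $p\in\Omega$ that stays in $\Omega$. By Step~2 it is a solution of \eqref{PDSdeddi}, and your dissipation inequality forces $\dot y=0$ a.e. Hence $0$ lies in the right-hand side at $p$, and your first part gives $p\in S$. No semicontinuity of the projection is needed.

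\textbf{If you keep the semicontinuity route.} It works as you suspect. By the Moreau decomposition in the $\nabla^2R(x)$-metric, $\min_{g\in\partial f(x)}\nu(g)^\top\nabla^2R(x)\nu(g)$ equals the minimum of $(g+\zeta)^\top\nabla^2R(x)^{-1}(g+\zeta)$ over $g\in\partial f(x)$ and $\zeta\in\mathcal{N}_{\mathcal X}(x)$. The minimizing $\zeta$ stay bounded, by comparison with $\zeta=0$. Outer semicontinuity of $\partial f$ and $\mathcal{N}_{\mathcal X}$, together with continuity of $\nabla^2R$, then gives lower semicontinuity. Your sentence about tangent cones and projections being ``only outer semicontinuous'' is muddled. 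For convex $\mathcal{X}$ it is the normal-cone map that is outer semicontinuous, which is why the normal-cone form is the right place to take limits.
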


\begin{proof}
We prove both the directions and subsequently the convergence result.

\textbf{Step 1: Equivalence of Stationary Point and Equilibrium Point}
\medskip
\noindent

\textbf{(Stationary point $\Rightarrow$ Equilibrium point).}
Let $x^\ast \in \mathcal{X}$ be a stationary point of \eqref{eq:main_problem}. Then there exists $g(x^\ast) \in \partial f(x^\ast)$ such that
$
\langle g(x^\ast), v \rangle \ge 0,
\quad \forall\, v \in \mathcal{T}_{\mathcal{X}}(x^\ast).
$
We show that
\[
0 \in 
\mathcal{P}^{x^\ast}_{\mathcal{T}_{\mathcal{X}}(x^\ast)}
\big(
- \nabla^2 R(x^\ast)^{-1} \partial f(x^\ast)
\big).
\]

Let
$
\nu_1 =
\mathcal{P}^{x^\ast}_{\mathcal{T}_{\mathcal{X}}(x^\ast)}
\big(
- \nabla^2 R(x^\ast)^{-1} g(x^\ast)
\big).
$ 
By definition of the projection,\\
$
\nu_1
=
\argmin_{\nu \in \mathcal{T}_{\mathcal{X}}(x^\ast)}
\left\|
\nu + \nabla^2 R(x^\ast)^{-1} g(x^\ast)
\right\|_{x^\ast}^{2},
$
where $\|z\|_{x^\ast}^2 := \langle \nabla^2 R(x^\ast) z, z \rangle$. 
The first-order optimality condition yields
\[
\left\langle
\nabla^2 R(x^\ast)
\Big(
\nu_1 + \nabla^2 R(x^\ast)^{-1} g(x^\ast)
\Big),
\nu - \nu_1
\right\rangle
\ge 0,
\quad \forall\, \nu \in \mathcal{T}_{\mathcal{X}}(x^\ast).
\]

Since $\mathcal{T}_{\mathcal{X}}(x^\ast)$ is a cone, choosing $\nu = 2\nu_1$ and $\nu = \tfrac{1}{2}\nu_1$ gives
\[
\left\langle
\nabla^2 R(x^\ast)
\Big(
\nu_1 + \nabla^2 R(x^\ast)^{-1} g(x^\ast)
\Big),
\nu_1
\right\rangle
= 0.
\]
Expanding, we obtain
$
\langle g(x^\ast), \nu_1 \rangle
=
- \langle \nabla^2 R(x^\ast)\nu_1, \nu_1 \rangle.
$ 
Since $\nabla^2 R(x^\ast)$ is positive definite, the right-hand side is strictly negative whenever $\nu_1 \neq 0$, which contradicts the stationarity condition. Hence $\nu_1 = 0$, proving that $x^\ast$ is an equilibrium point.

\medskip
\noindent
\textbf{(Equilibrium point $\Rightarrow$ Stationary point).}
Let $x^\ast \in \mathcal{X}$ be an equilibrium point of \eqref{PDSdeddi}. Then there exists $g(x^\ast) \in \partial f(x^\ast)$ such that
$
\mathcal{P}^{x^\ast}_{\mathcal{T}_{\mathcal{X}}(x^\ast)}
\big(
- \nabla^2 R(x^\ast)^{-1} g(x^\ast)
\big)
= 0.
$ 
Thus, $\nu = 0$ is the solution to 
$
\min_{\nu \in \mathcal{T}_{\mathcal{X}}(x^\ast)}
\left\|
\nu + \nabla^2 R(x^\ast)^{-1} g(x^\ast)
\right\|_{x^\ast}^{2}.
$ 
The first-order optimality condition at $\nu = 0$ yields
$
\left\langle
g(x^\ast), \nu
\right\rangle
\ge 0,
\quad \forall\, \nu \in \mathcal{T}_{\mathcal{X}}(x^\ast),
$ 
which is precisely the stationarity condition.

\medskip
\noindent
\textbf{Step 2: Stability and convergence.}

Consider the Lyapunov function $V(x) := f(x) - f^\ast.$ 
Since $x(\cdot)$ is a Carath\'eodory solution of \eqref{PDSdeddi}, it is absolutely continuous, and hence $f(x(t))$ is absolutely continuous and differentiable almost everywhere.

The Lie derivative of $V$ along \eqref{PDSdeddi} is given by
\[
\mathcal{L}V(x)
=
\left\{
\langle \nabla f(x), \nu \rangle
\;\middle|\;
\nu =
\mathcal{P}^{x}_{\mathcal{T}_{\mathcal{X}}(x)}
\big(
- \nabla^2 R(x)^{-1} g(x)
\big), \; g(x) \in \partial f(x)
\right\}.
\]

Let
${\displaystyle
\nu =
\mathcal{P}^{x}_{\mathcal{T}_{\mathcal{X}}(x)}
\big(
- \nabla^2 R(x)^{-1} g(x)
\big).
}$ 
From the optimality condition, we obtain
\[
\langle g(x), \nu \rangle
=
- \langle \nabla^2 R(x)\nu, \nu \rangle.
\]
Since $\nabla^2 R(x)$ is positive definite,
$
\langle \nabla f(x), \nu \rangle \le 0.
$ 
Thus,
\[
\frac{d}{dt} f(x(t)) \le 0
\quad \text{for almost every } t.
\]

Define the sublevel set
$
\Omega_c := \{ x \in \mathcal{X} \mid f(x) \le c \},
$ 
where $c = f(x_0)$. Then $\Omega_c$ is compact and positively invariant.

By the invariance principle (Theorem~2 in \cite{cortes2008discontinuous}), every Carath\'eodory solution converges to the largest invariant set contained in
${\displaystyle 
\Omega_c \cap
\overline{
\left\{
y \in \mathcal{X}
\;\middle|\;
0 \in \mathcal{L}V(y)
\right\}
}.
}$ 
From the identity\\
$
\langle g(y), \nu \rangle
=
- \langle \nabla^2 R(y)\nu, \nu \rangle,
$ 
we deduce that $0 \in \mathcal{L}V(y)$ which implies $\nu = 0$. Hence
\[
0 =
\mathcal{P}^{y}_{\mathcal{T}_{\mathcal{X}}(y)}
\big(
- \nabla^2 R(y)^{-1} g(y)
\big),
\]
which shows that $y$ is a stationary point. 
Therefore, every trajectory converges to the set of stationary points, completing the proof.
\end{proof}

\section{Finite-Time Analysis: Convex Objectives}
We now move beyond asymptotic guaranties and study the finite-time behavior of stochastic mirror descent through explicit probabilistic bounds. The analysis differs for convex and non-convex settings, which we treat separately. In the convex case, we bound the probability that the iterates remain within a prescribed neighborhood of the optimal solution, leading to the notion of sample complexity introduced below.

\begin{definition}[Sample Complexity]
\label{def:sample_complexity}
Let $\epsilon > 0$ and $p \in (0,1)$. For a sequence $\{z_n\}_{n \geq 1}$, the sample complexity $N(\epsilon, p)$ is the smallest positive integer such that for all $n \geq N(\epsilon, p)$,
\begin{equation}
    \mathbb{P}\big(f(z_n) - f^\ast \geq \epsilon\big) \leq p.
\end{equation}
This corresponds to the number of iterations required to obtain an $\epsilon$-optimal solution with confidence at least $1-p$.
\end{definition}
Throughout this section, we impose smoothness assumptions on the objective function.
We begin by stating the following assumptions, which will play a central role in the analysis.

\begin{assumption}
The function $f$ is continuously differentiable, and its gradient is \\ $L$-Lipschitz continuous. That is, for all $x,y \in \mathcal{X}$,
$
\norm{\nabla f(x) - \nabla f(y)}_\ast \leq L \norm{x - y}.
$
\label{ass8}
\end{assumption}

\begin{assumption}
There exists a Borel measurable function 
$
\widetilde{G} : \mathcal{X} \times \mathcal{S} \to \mathbb{R}^d
$ 
such that the following Poisson equation holds:
\begin{equation}
G(x,s) - \int_{\mathcal{S}} G(x,s') \, \mu_x(ds')
=
\widetilde{G}(x,s) - (\Pi \widetilde{G})(x,s),
\end{equation}
where
${\displaystyle 
(\Pi \widetilde{G})(x,s)
=
\int_{\mathcal{S}} \widetilde{G}(x,s') \, \Pi(x,s)(ds').
}$ 
Further, 
${\displaystyle
\sup_{x \in \mathcal{X},\, s \in \mathcal{S}} \norm{\widetilde{G}(x,s)}_\ast \leq \mathbf{G}.
}$
\label{ass:Poisson}
\end{assumption}
Recall from Assumption~\ref{assumpton3} that $\nabla f(x) = \int_{\mathcal{S}} G(x,s') \, \mu_x(ds').$
Moreover, from the boundedness of $\widetilde{G}$, we obtain
\begin{equation*}
\begin{aligned}
\norm{(\Pi \widetilde{G})(x,s)}_\ast
&= \norm{\int_{\mathcal{S}} \widetilde{G}(x,s') \, \Pi(x,s)(ds')}_\ast 
\le \int_{\mathcal{S}} \norm{\widetilde{G}(x,s')}_\ast \, \Pi(x,s)(ds') \le \mathbf{G}. 
\end{aligned}
\end{equation*}

\begin{assumption}
There exists a constant $L_{\Pi} > 0$ such that for all $x_1, x_2 \in \mathcal{X}$ and all $s \in \mathcal{S}$,
$
\|\widetilde{G}(x_1,s) - \widetilde{G}(x_2,s)\|_\ast
\le
L_{\Pi} \|x_1 - x_2\|.
$
That is, the mapping $\widetilde{G}(x,s)$ is Lipschitz continuous with respect to $x$, uniformly in $s$.
\label{assumptionLipschitzGtilde}
\end{assumption}

\begin{remark}
This assumption is natural in our setting. The function $\widetilde{G}$ arises from the Poisson equation associated with the Markov kernel, and under standard conditions—such as Lipschitz continuity of $G(x,s)$ in $x$ and uniform ergodicity of the chain—it inherits Lipschitz continuity in $x$. Such properties are classical in the analysis of Markov processes and stochastic approximation \cite{doucet2017asymptotic}.
\end{remark}

We now present a key proposition showing that the Markov noise in stochastic mirror descent can be decomposed into a martingale difference term and a bias term that vanishes asymptotically.

\begin{theorem}
Let $\{x_n\}$ be the sequence generated by the stochastic mirror descent algorithm \eqref{mirror_descent1}, and suppose Assumptions~\ref{ass:Poisson} and \ref{assumptionLipschitzGtilde} hold. 
Define the noise sequence
\[
S_{n+1} := G(x_n,S_n) - \nabla f(x_n) = A_{1,n+1} + A_{2,n+1}, \; \; \text{where,}
\]
\begin{itemize}
\item $A_{1,n+1} = \widetilde{G}(x_n,S_{n+1}) - (\Pi \widetilde{G})(x_n,S_n)$, $n \geq 0$, is a martingale difference sequence with respect to the filtration  $\mathcal{F}_n = \sigma (x_k, S_k \; | \; 0 \leq k \leq n )$, $n\geq 0$, 
i.e.,\\ $\mathbb{E}[A_{1,n+1} \mid \mathcal{F}_n] = 0$ for all $n$. 
\item The remainder term $A_{2,n+1} = \widetilde{G}(x_n,S_n)
-
\widetilde{G}(x_n,S_{n+1})$ satisfies
\begin{equation}
    \left|
    \sum_{k=1}^n \alpha_k \langle x^\ast - x_k, A_{2,k+1} \rangle
    \right|
    \leq
    4 \mathbf{G} D
    + \frac{\mathbf{G}^2}{2 \sigma_R} \sum_{k=1}^n \alpha_k^2
    + \sum_{k=1}^n \frac{D L_\Pi \mathbf{G}}{\sigma_R} \alpha_k^2.
    \label{innerPoisson}
\end{equation}
\end{itemize}
\label{theorem8}
\end{theorem}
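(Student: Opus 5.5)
The plan is to treat the two bullets separately. The first follows from the Poisson equation; the second is an Abel summation (telescoping) argument that moves the time shift in $A_{2,k+1}$ onto the slowly varying factors $\alpha_k$, $x_k$ and the $x$-argument of $\widetilde{G}$.

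\textbf{Decomposition and martingale property.} By Assumption~\ref{assumpton3}, $\nabla f(x_n)=\int_{\mathcal S} G(x_n,s')\mu_{x_n}(ds')$. Assumption~\ref{ass:Poisson} then gives
\[
G(x_n,S_n)-\nabla f(x_n)=\widetilde G(x_n,S_n)-(\Pi\widetilde G)(x_n,S_n).
\]
Adding and subtracting $\widetilde G(x_n,S_{n+1})$ yields $A_{1,n+1}+A_{2,n+1}$. The variable $x_n$ is $\mathcal F_n$-measurable, and conditionally on $\mathcal F_n$ we have $S_{n+1}\sim \Pi(x_n,S_n)$. Hence $\mathbb E[\widetilde G(x_n,S_{n+1})\mid\mathcal F_n]=(\Pi\widetilde G)(x_n,S_n)$, which is exactly the martingale difference property. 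Boundedness by $\mathbf G$ guarantees integrability.

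\textbf{Telescoping the remainder.} Write $a_k:=\alpha_k\langle x^\ast-x_k,\widetilde G(x_k,S_k)\rangle$ and $a_k':=\alpha_k\langle x^\ast-x_k,\widetilde G(x_k,S_{k+1})\rangle$, so that the sum equals $\sum_{k=1}^n(a_k-a_k')$. Re-index as
\[
\sum_{k=1}^n(a_k-a_k') = a_1 - a_n' + \sum_{k=1}^{n-1}(a_{k+1}-a_k').
\]
Each boundary term is bounded by $\alpha_k D\mathbf G$, where $D$ is the diameter of $\mathcal X$. With $\alpha_k\le 1$ (or after absorbing constants), the two boundary terms together are at most $2\mathbf G D$, which lies within the allowed $4\mathbf G D$.

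\textbf{The bulk term.} Split $a_{k+1}-a_k'$ into three pieces:
\[
(\alpha_{k+1}-\alpha_k)\langle x^\ast-x_{k+1},\widetilde G(x_{k+1},S_{k+1})\rangle
+\alpha_k\langle x_k-x_{k+1},\widetilde G(x_{k+1},S_{k+1})\rangle
+\alpha_k\langle x^\ast-x_k,\widetilde G(x_{k+1},S_{k+1})-\widetilde G(x_k,S_{k+1})\rangle .
\]
The first piece telescopes because $\{\alpha_k\}$ is non-increasing. Its sum is bounded by $D\mathbf G(\alpha_1-\alpha_n)\le D\mathbf G\alpha_1$, which accounts for part of the $4\mathbf GD$ budget. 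For the second and third pieces I use \eqref{eqna}, namely $\|x_{k+1}-x_k\|\le \alpha_k\mathbf G/\sigma_R$. The second piece is then at most $\alpha_k^2\mathbf G^2/\sigma_R$. The third piece, by Assumption~\ref{assumptionLipschitzGtilde}, is at most $\alpha_kDL_\Pi\alpha_k\mathbf G/\sigma_R$, which gives the last sum in \eqref{innerPoisson}.

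\textbf{Main obstacle: the constant in the second piece.} The target has $\mathbf G^2/(2\sigma_R)$ rather than $\mathbf G^2/\sigma_R$. To recover the factor $1/2$, I would try Young's inequality
\[
\alpha_k\|x_k-x_{k+1}\|\,\mathbf G\le \tfrac{\sigma_R}{2}\|x_{k+1}-x_k\|^2+\tfrac{\alpha_k^2\mathbf G^2}{2\sigma_R},
\]
and then use the strong-convexity inequality from Claim 1 more carefully to absorb the quadratic term. Failing that, I would accept a constant that differs by a harmless factor. A second, minor obstacle is that the stated indexing includes $k=1$ while $\alpha_1$ may exceed $1$. Some care is needed so that the boundary terms stay within the $4\mathbf GD$ budget, for instance by assuming $\alpha_k\le 1$.
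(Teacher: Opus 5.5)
Your argument is essentially the paper's. The paper splits $A_{2,k+1}$ into $\gamma_k-\gamma_{k+1}$, with $\gamma_k=\widetilde G(x_k,S_k)$, plus the correction $\widetilde G(x_{k+1},S_{k+1})-\widetilde G(x_k,S_{k+1})$. It then sums by parts with $b_k=\alpha_k(x^\ast-x_k)$. The correspondence with your proof is direct:
\begin{itemize}
\item your first two bulk pieces are exactly $\langle b_{k+1}-b_k,\gamma_{k+1}\rangle$;
\item your third piece is the paper's Lipschitz term;
\item the paper's boundary term $\langle b_n,\gamma_{n+1}\rangle$ combined with its $k=n$ Lipschitz term equals your $a_n'$.
\end{itemize}
Your split $b_{k+1}-b_k=(\alpha_{k+1}-\alpha_k)(x^\ast-x_{k+1})+\alpha_k(x_k-x_{k+1})$ is slightly cleaner than the paper's steps (b)--(d). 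Those steps bound $\|x^\ast\|$ and $\|x_{k+1}\|$ by $D$ separately, which is a norm bound rather than a diameter bound. Your split also tightens the bookkeeping: the boundary terms and the first piece together give $(\alpha_1+\alpha_n)\mathbf{G}D+(\alpha_1-\alpha_n)\mathbf{G}D=2\alpha_1\mathbf{G}D$. So the $4\mathbf{G}D$ budget needs only $\alpha_1\le 2$; the paper simply assumes $\alpha_1\le 1$.

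On the factor $1/2$, your diagnosis is correct, but your proposed fix would not work. With $\|y_k\|_\ast\le\mathbf{G}$, \eqref{eqna} gives $\mathbf{G}\alpha_k\|x_k-x_{k+1}\|\le\mathbf{G}^2\alpha_k^2/\sigma_R$. The paper's step (d) claims $\mathbf{G}^2/(2\sigma_R)$ from this same inequality, which is a factor-of-two slip in the paper.

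Young's inequality does not rescue it. This is a pathwise estimate, and there is no negative term such as $-\mathbb{D}_R(x_k,x_{k+1})$ available to absorb $\frac{\sigma_R}{2}\|x_{k+1}-x_k\|^2$. That absorption is available only inside the descent inequality in the proof of Theorem~\ref{cvrgncethm}. Here you would have to bound the quadratic by \eqref{eqna} again, and you would get back $\mathbf{G}^2\alpha_k^2/\sigma_R$. Moreover, \eqref{eqna} holds with equality for $R=\frac12\|\cdot\|_2^2$ and an interior step, so no term-by-term estimate of this kind can produce the $1/2$.

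What both your argument and the paper's actually prove is \eqref{innerPoisson} with $\mathbf{G}^2/\sigma_R$ in place of $\mathbf{G}^2/(2\sigma_R)$. This changes only the constant multiplying $\sum_k\alpha_k^2$ in Theorem~\ref{cvrgncethm}. You should state that version rather than search for the missing factor.
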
 
\begin{proof}
The proof begins by expressing the stochastic gradient noise through the Poisson equation. We then decompose the noise into a martingale difference term and a remainder term, and estimate these two parts separately.
Recall that the mean field is given by
${\displaystyle
\nabla f(x_n)
=
\int_{\mathcal{S}} G(x_n,s')\,\mu_{x_n}(ds'),
}$ 
which follows directly from Assumption~\ref{assumpton3}. Therefore, $S_{n+1}$ measures the deviation of the stochastic observation $G(x_n,S_n)$ from its stationary average.
Using the Poisson equation from Assumption~\ref{ass:Poisson}, we obtain
\begin{equation*}
\begin{aligned}
S_{n+1}
&= G(x_n,S_n) - \nabla f(x_n) = \widetilde{G}(x_n,S_n) - (\Pi \widetilde{G})(x_n,S_n).
\end{aligned}
\end{equation*}
We now introduce the decomposition
\begin{equation*}
\begin{aligned}
S_{n+1}
&=
\underbrace{\widetilde{G}(x_n,S_{n+1})
-
(\Pi \widetilde{G})(x_n,S_n)}_{A_{1,n+1}}
 +
\underbrace{\widetilde{G}(x_n,S_n)
-
\widetilde{G}(x_n,S_{n+1})}_{A_{2,n+1}} .
\end{aligned}
\end{equation*}
\textbf{Step 1: $A_{1,n+1}$ is a martingale difference term.}

Since $S_{n+1}$ is generated according to the transition kernel $\Pi(x_n,S_n)$, we have
\begin{equation*}
\mathbb{E}\!\left[
\widetilde{G}(x_n,S_{n+1})
\mid
\mathcal{F}_n
\right]
=
\int_{\mathcal{S}} \widetilde{G}(x_n,s')\,\Pi(x_n,S_n)(ds').
\end{equation*}
By definition of the operator $\Pi$, the right-hand side equals $(\Pi \widetilde{G})(x_n,S_n).$
Consequently,
\begin{equation*}
\mathbb{E}[A_{1,n+1}\mid \mathcal{F}_n] = 0.
\end{equation*}
\textbf{ Step - 2: Proof of \eqref{innerPoisson}.}
We first write
\begin{equation*}
\begin{aligned}
A_{2,n+1}
&=
\widetilde{G}(x_n,S_n)
-
\widetilde{G}(x_n,S_{n+1})
\\
&=
\underbrace{\widetilde{G}(x_n,S_n)
-
\widetilde{G}(x_{n+1},S_{n+1})}_{T_{1,n+1}}
+
\underbrace{\widetilde{G}(x_{n+1},S_{n+1})
-
\widetilde{G}(x_n,S_{n+1})}_{T_{2,n+1}}.
\end{aligned}
\end{equation*}
Define $\gamma_k := \widetilde{G}(x_k,S_k),
\qquad
b_k := \alpha_k (x^\ast - x_k).$
Then, considering the first term, we have
\begin{equation}
\begin{aligned}
& \left|
\sum_{k=1}^{n} \alpha_k
\langle x^\ast - x_k,\,
\widetilde{G}(x_k,S_k)-\widetilde{G}(x_{k+1},S_{k+1}) \rangle
\right|
\\ \leq &
\left|
\sum_{k=1}^{n}
\langle b_k,\gamma_k-\gamma_{k+1}\rangle
\right|
\leq 
|\langle b_1,\gamma_1\rangle|
+
|\langle b_n,\gamma_{n+1}\rangle|
+
\underbrace{
\left|
\sum_{k=1}^{n-1}
\langle b_{k+1}-b_k,\gamma_{k+1}\rangle
\right|
}_{\mathbf{T}}.
\end{aligned}
\label{Ry}
\end{equation}

Now we estimate the term $\mathbf{T}$. Using the boundedness of $\gamma_{k+1}$, we get
\begin{equation*}
\begin{aligned}
&\left|
\sum_{k=1}^{n-1}
\langle b_{k+1}-b_k,\gamma_{k+1}\rangle
\right|
\\ &\leq
\mathbf{G}
\sum_{k=1}^{n-1}
\|b_{k+1}-b_k\|
\overset{(a)}{\leq}
\mathbf{G}
\sum_{k=1}^{n-1}
\left\|
\alpha_{k+1}(x^\ast-x_{k+1})
-
\alpha_k(x^\ast-x_k)
\right\|
\\
&\overset{(b)}{\leq}
\mathbf{G}
\sum_{k=1}^{n-1}
\left(
\|x^\ast\|(\alpha_k-\alpha_{k+1})
+
\alpha_k
\left\|
x_k-\frac{\alpha_{k+1}}{\alpha_k}x_{k+1}
\right\|
\right)
\\
&\overset{(c)}{\leq}
\mathbf{G}
\sum_{k=1}^{n-1}
\left(
\|x^\ast\|(\alpha_k-\alpha_{k+1})
+
\alpha_k\|x_k-x_{k+1}\|
+
\alpha_k\left\|
x_{k+1}-\frac{\alpha_{k+1}}{\alpha_k}x_{k+1}
\right\|
\right)
\\
&\overset{(d)}{\leq}
\mathbf{G}D\alpha_1
+
\frac{\mathbf{G}^2}{2\sigma_R}
\sum_{k=1}^{n-1}\alpha_k^2
+
\mathbf{G}D\alpha_1.
\end{aligned}
\end{equation*}

The inequality in \((c)\) follows from the triangle inequality, and the inequality in \((d)\) follows from \eqref{eqna} and the fact that the diameter of the constraint set is $D$. Therefore, from \eqref{Ry}, we obtain
\begin{equation*}
\begin{aligned}
\left|
\sum_{k=1}^{n} \alpha_k
\langle x^\ast - x_k,\,
\widetilde{G}(x_k,S_k)-\widetilde{G}(x_{k+1},S_{k+1}) \rangle
\right|
\leq
4 \mathbf{G}D
+
\frac{\mathbf{G}^2}{2\sigma_R}
\sum_{k=1}^{n}\alpha_k^2.
\end{aligned}
\end{equation*}
Without loss of generality, we assume here that $\alpha_1\leq 1$. 
Next, consider the term $T_{2,n+1}$. Then
\begin{equation*}
\begin{aligned}
&\left|
\sum_{k=1}^{n} \alpha_k
\langle x^\ast - x_k,\,
\widetilde{G}(x_k,S_{k+1})-\widetilde{G}(x_{k+1},S_{k+1}) \rangle
\right|
\\ &\leq
\sum_{k=1}^{n}
\alpha_k
\|x^\ast-x_k\|
\,
\|\widetilde{G}(x_k,S_{k+1})-\widetilde{G}(x_{k+1},S_{k+1})\|_\ast
\\
&\leq
\sum_{k=1}^{n}
\alpha_k D L_\Pi \|x_{k+1}-x_k\|
\leq
\sum_{k=1}^{n}
\frac{D L_\Pi \mathbf{G}}{\sigma_R}\alpha_k^2.
\end{aligned}
\end{equation*}
The second inequality is because of the diameter of the constrained set $\mathcal{X}$ and in view of Assumption \ref{ass:Poisson}. The final inequality follows from \eqref{eqna}.
\end{proof}

\begin{remark}
Theorem~\ref{theorem8} leverages the Poisson equation to represent the Markov noise, revealing that stochastic mirror descent can be viewed as its classical counterpart with independent noise \cite{rsa}, perturbed by a diminishing bias term.
\end{remark}

\begin{theorem}[Finite-time concentration for the ergodic iterate]
Let $\{x_n\}$ be the sequence generated by the stochastic mirror descent algorithm \eqref{mirror_descent1}, and suppose Assumptions~\ref{ass:Poisson} and  \ref{assumptionLipschitzGtilde} hold. Define the weighted average iterate
${\displaystyle
z_n := \frac{\sum_{k=1}^n \alpha_k x_k}{\sum_{k=1}^n \alpha_k}.
}$ 
Fix $\epsilon>0$, and let $n_1>0$ be such that for all $n \geq n_1$,
\begin{equation*}
\begin{aligned}
    \sum_{k=1}^{n} \alpha_k
    &\geq \max \Bigg\{ \frac{3}{\epsilon}\Big( \mathbb{D}_R(x^\ast,x_1) + 4 \mathbf{G} D \Big), \frac{3\big(\mathbf{G}+\mathbf{G}^2+2L_\Pi D \mathbf{G}\big)}{2\sigma_R\epsilon}
    \sum_{k= 1}^n \alpha_k^2  \Bigg\}.
    \end{aligned}
\end{equation*}
Then, for all $n\ge n_1$,
${\displaystyle
\mathbb{P}\big(f(z_n)-f^\ast \ge \epsilon\big)
\leq
\exp\!\left(
-\frac{\epsilon^2\left(\sum_{k=1}^{n}\alpha_k\right)^2}
{72D^2\mathbf{G}^2\sum_{k=1}^{n}\alpha_k^2}
\right).
}$
\label{cvrgncethm}
\end{theorem}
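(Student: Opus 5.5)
The plan follows the classical mirror-descent argument of \cite{rsa}, with the Markov bias handled by Theorem~\ref{theorem8} and the martingale part by an Azuma--Hoeffding bound.

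\textbf{Step 1: one-step inequality.} Take the optimality condition \eqref{MirrorDescentIneq} with $x=x^\ast$ and apply the three-point identity for $\mathbb{D}_R$. Then use $\sigma_R$-strong convexity and Young's inequality on $\alpha_k\langle y_k, x_k-x_{k+1}\rangle$. This gives, for $y_k=g(x_k,S_k)$,
\[
\alpha_k\langle y_k, x_k-x^\ast\rangle \le \mathbb{D}_R(x^\ast,x_k)-\mathbb{D}_R(x^\ast,x_{k+1})+\frac{\alpha_k^2}{2\sigma_R}\|y_k\|_\ast^2 .
\]
Next write $y_k=\nabla f(x_k)+A_{1,k+1}+A_{2,k+1}$ as in Theorem~\ref{theorem8}. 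By convexity, $\alpha_k(f(x_k)-f^\ast)\le \alpha_k\langle\nabla f(x_k),x_k-x^\ast\rangle$. Summing over $k=1,\dots,n$, telescoping the Bregman terms, and using Jensen for $z_n$ yields
\[
\Big(\sum_{k=1}^n\alpha_k\Big)\big(f(z_n)-f^\ast\big)\le \mathbb{D}_R(x^\ast,x_1)+\frac{\mathbf{G}^2}{2\sigma_R}\sum_{k=1}^n\alpha_k^2+\sum_{k=1}^n\alpha_k\langle x^\ast-x_k,A_{1,k+1}\rangle+\sum_{k=1}^n\alpha_k\langle x^\ast-x_k,A_{2,k+1}\rangle .
\]

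\textbf{Step 2: bias term.} Bound the last sum deterministically by \eqref{innerPoisson}. After this, every deterministic term is a constant $4\mathbf{G}D+\mathbb{D}_R(x^\ast,x_1)$ plus a multiple of $\sum\alpha_k^2$ with coefficient at most $(\mathbf{G}+\mathbf{G}^2+2L_\Pi D\mathbf{G})/(2\sigma_R)$; I will absorb the constants loosely. The hypothesis on $n_1$ is designed exactly so that each of these two groups is at most $\frac{\epsilon}{3}\sum\alpha_k$. Hence, on the event $\{f(z_n)-f^\ast\ge\epsilon\}$, the martingale sum $M_n:=\sum_{k}\alpha_k\langle x^\ast-x_k,A_{1,k+1}\rangle$ must satisfy $M_n\ge \frac{\epsilon}{3}\sum_k\alpha_k$.

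\textbf{Step 3: concentration (main step).} Since $x_k$ is $\mathcal{F}_k$-measurable and $\mathbb{E}[A_{1,k+1}\mid\mathcal{F}_k]=0$, $M_n$ is a martingale. Its increments are bounded by $\alpha_k D\|A_{1,k+1}\|_\ast\le 2D\mathbf{G}\alpha_k$, because both $\widetilde G$ and $\Pi\widetilde G$ are bounded by $\mathbf{G}$. Azuma--Hoeffding then gives
\[
\mathbb{P}\Big(M_n\ge \tfrac{\epsilon}{3}\textstyle\sum\alpha_k\Big)\le\exp\!\Big(-\frac{(\epsilon\sum\alpha_k/3)^2}{2\cdot 4D^2\mathbf{G}^2\sum\alpha_k^2}\Big)=\exp\!\Big(-\frac{\epsilon^2(\sum\alpha_k)^2}{72D^2\mathbf{G}^2\sum\alpha_k^2}\Big),
\]
which is exactly the stated bound.

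The main obstacle is bookkeeping rather than any conceptual step. The constants must be matched to the $n_1$ condition. The index of $A_{1,k+1}$ must be checked to be adapted to $\mathcal{F}_{k+1}$ while $x_k$ is $\mathcal{F}_k$-measurable; this holds because $S_{k+1}$ is drawn from $\Pi(x_k,S_k)$. Finally, if $x^\ast$ lies on the boundary for the entropic case, the Bregman term should be read via the relative-interior remark.
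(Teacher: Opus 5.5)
Your proposal follows the paper's proof essentially step for step. The shared steps are the three-point identity with Young's inequality and strong convexity for the one-step bound; convexity, telescoping and Jensen for $z_n$; the deterministic estimate \eqref{innerPoisson} for the $A_{2,k+1}$ part; the $\epsilon/3$ split enforced by the $n_1$ condition; and Azuma--Hoeffding with increments bounded by $2D\mathbf{G}\alpha_k$, which gives exactly the constant $72$.

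The one claim that fails as written is in Step~2, where you say the $\sum_k\alpha_k^2$ coefficient is at most $(\mathbf{G}+\mathbf{G}^2+2L_\Pi D\mathbf{G})/(2\sigma_R)$. Your Young term, which you computed correctly, contributes $\mathbf{G}^2/(2\sigma_R)$. The bound \eqref{innerPoisson} contributes $\mathbf{G}^2/(2\sigma_R)+L_\Pi D\mathbf{G}/\sigma_R$. The total is $(2\mathbf{G}^2+2L_\Pi D\mathbf{G})/(2\sigma_R)$, which is strictly larger than the stated coefficient whenever $\mathbf{G}>1$. In that case the $n_1$ hypothesis no longer forces this group below $\frac{\epsilon}{3}\sum_k\alpha_k$.

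The paper reaches the stated constant only because it bounds the Young term by $\frac{\alpha_n^2}{2\sigma_R}\mathbf{G}$ rather than $\frac{\alpha_n^2}{2\sigma_R}\mathbf{G}^2$. So the defect lies in the statement, not in your method, but ``absorbing the constants loosely'' does not repair it. You need either $\mathbf{G}\le 1$, or the constant $2\mathbf{G}^2+2L_\Pi D\mathbf{G}$ in place of $\mathbf{G}+\mathbf{G}^2+2L_\Pi D\mathbf{G}$ in the $n_1$ condition. If you also re-derive \eqref{innerPoisson} from \eqref{eqna}, its middle term is really $\frac{\mathbf{G}^2}{\sigma_R}\sum_k\alpha_k^2$, which raises the constant to $3\mathbf{G}^2+2L_\Pi D\mathbf{G}$. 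With that change, your Steps~2 and~3 go through verbatim.
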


\begin{proof}
The first-order optimality condition for \eqref{mirror_descent1} gives
\begin{equation}
\begin{aligned}
    \alpha_n \langle G(x_n,S_n), x - x_{n+1} \rangle
    \geq &
    - \langle \nabla R(x_{n+1}) - \nabla R(x_n), x - x_{n+1} \rangle,
    \qquad \forall\, x \in \mathcal{X},
    \\ = & \mathbb{D}_R(x_{n+1},x_n)+ \mathbb{D}_R(x,x_{n+1})-\mathbb{D}_R(x,x_n).
\end{aligned}
\label{firstor}
\end{equation}
Rearranging the terms and choosing $x=x^\ast \in \mathcal{X}$, we obtain
\begin{equation*}
\begin{aligned}
    \mathbb{D}_R(x^\ast,x_{n+1})
   & \leq\,
    \mathbb{D}_R(x^\ast,x_n)
    + \alpha_n \langle G(x_n,S_n), x^\ast - x_n \rangle
   \\ & + \alpha_n \langle G(x_n,S_n), x_n - x_{n+1} \rangle
    - \mathbb{D}_R(x_n,x_{n+1}).
\end{aligned}
\end{equation*}
Now, by Young-Fenchel inequality,
\begin{equation*}
    \alpha_n \langle G(x_n,S_n), x_n-x_{n+1} \rangle
    \leq
    \frac{\alpha_n^2}{2\sigma_R}\norm{G(x_n,S_n)}_\ast^2
    + \frac{\sigma_R}{2}\norm{x_n-x_{n+1}}^2,
\end{equation*}
and, by strong convexity, $ -\mathbb{D}_R(x_n,x_{n+1})
    \leq
    -\frac{\sigma_R}{2}\norm{x_n-x_{n+1}}^2.$
Therefore, from \eqref{firstor}, we get
\begin{equation}
    \mathbb{D}_R(x^\ast,x_{n+1})
    \leq
    \mathbb{D}_R(x^\ast,x_n)
    + \alpha_n \langle G(x_n,S_n), x^\ast - x_n \rangle
    + \frac{\alpha_n^2}{2\sigma_R}\mathbf{G}.
    \label{cvxity}
\end{equation} 
Theorem~\ref{theorem8} gives
$
    G(x_n,S_n) = \nabla f(x_n) + S_{n+1},
$ 
with $S_{n+1}=A_{1,n+1}+A_{2,n+1}$. Hence, using the convexity of $f$, \eqref{cvxity} becomes
\begin{equation*}
\begin{aligned}
    \mathbb{D}_R(x^\ast,x_{n+1})
    \leq\,
    \mathbb{D}_R(x^\ast,x_n)
    - \alpha_n \big(f(x_n)-f^\ast\big)
    + \alpha_n \langle x^\ast-x_n, S_{n+1} \rangle
    + \frac{\alpha_n^2}{2\sigma_R}\mathbf{G}.
\end{aligned}
\end{equation*}
Applying the telescopic sum, we obtain
\begin{equation*}
\begin{aligned}
    \sum_{k=1}^n \alpha_k \big(f(x_k)-f^\ast\big)
    \leq
    \mathbb{D}_R(x^\ast,x_1)
    + \sum_{k=1}^{n} \alpha_k \langle x^\ast-x_k, S_{k+1} \rangle
    + \sum_{k=1}^n \frac{\alpha_k^2}{2\sigma_R}\mathbf{G}.
\end{aligned}
\end{equation*}

Now define
$
z_n = \frac{\sum_{k=1}^n \alpha_k x_k}{\sum_{k=1}^n \alpha_k}.
$
Then, 
\begin{equation}
    f(z_n)-f^\ast
    \leq
    \frac{
    \mathbb{D}_R(x^\ast,x_1)
    + \sum_{k=1}^{n} \alpha_k \langle x^\ast-x_k, S_{k+1} \rangle
    + \sum_{k = 1}^n \frac{\alpha_k^2}{2\sigma_R}\mathbf{G}
    }{
    \sum_{k=1}^n \alpha_k
    }.
    \label{Rumpays}
\end{equation}

Recall that $S_{n+1}=A_{1,n+1}+A_{2,n+1}$, and in view of Theorem \ref{theorem8}, we have
\begin{equation*}
    \left|
    \sum_{k=1}^n \alpha_k \langle x^\ast - x_k, A_{2,k+1} \rangle
    \right|
    \leq
    4 \mathbf{G} D
    + \frac{\mathbf{G}^2}{2 \sigma_R} \sum_{k=1}^n \alpha_k^2
    + \sum_{k=1}^n \frac{D L_\Pi \mathbf{G}}{\sigma_R} \alpha_k^2.
\end{equation*}
Thus, \eqref{Rumpays} can be written as
{\small 
\begin{equation*}
\begin{aligned}
   & f(z_n)-f^\ast
   \\ \leq &
    \frac{
    \mathbb{D}_R(x^\ast,x_1) + 4 \mathbf{G} D
    + \sum_{k=1}^{n} \alpha_k \langle x^\ast-x_k, A_{1,k+1} \rangle
    + \sum\limits_{k = 1}^n \frac{\alpha_k^2}{2\sigma_R}
    \big( \mathbf{G} + \mathbf{G}^2 + 2 D L_\Pi \mathbf{G} \big)
    }{
    \sum_{k=1}^n \alpha_k
    }.
\end{aligned}
\end{equation*}
}
Consider $n_1>0$ such that for all $n \geq n_1$, we have
\begin{equation*}
\begin{aligned}
    \sum_{k=1}^{n} \alpha_k \geq (\frac{3}{\epsilon} \mathbb{D}_R(x^\ast,x_1) +  4 \mathbf{G} D)
    \quad \text{and} \quad
    \sum_{k=1}^{n} \alpha_k \geq
    \frac{3\big(\mathbf{G}+\mathbf{G}^2+2L_\Pi D \mathbf{G}\big)}{2\sigma_R\epsilon}
    \sum_{k = 1}^n \alpha_k^2.
\end{aligned}
\end{equation*}

Now consider the event that $f(z_n)-f^\ast \geq \epsilon$. If this event holds for $n\geq n_1$, then
\begin{equation*}
   \sum_{k=1}^n \alpha_k \langle x^\ast-x_k, A_{1,k+1} \rangle
   \geq
   \frac{\epsilon}{3}\sum_{k=1}^n \alpha_k.
\end{equation*}
In other words, for all $n \geq n_1$,
\begin{equation*}
\begin{aligned}
    \mathbb{P}\big(f(z_n)-f^\ast \geq \epsilon\big)
    \leq
    \mathbb{P}\left(
    \sum_{k=1}^n \alpha_k \langle x^\ast-x_k, A_{1,k+1} \rangle
    \geq
    \frac{\epsilon}{3}\sum_{k=1}^n \alpha_k
    \right).
\end{aligned}
\end{equation*}

The remaining part of the proof is to bound the probability on the right-hand side. For this purpose, define
$
    Z_{n+1}
    :=
    \sum_{k=1}^n \alpha_k \langle x^\ast-x_k, A_{1,k+1} \rangle.
$ 
Then $\{Z_n\}$ is a martingale sequence. We now apply the Azuma-Hoeffding inequality. For this, note that
\begin{equation}
    \begin{aligned}
        \norm{x^\ast - x_k} \leq D,
        \qquad
        \norm{A_{1,k+1}}_\ast \leq 2 \mathbf{G}.
    \end{aligned}
    \label{Azuma}
\end{equation}
Recall that in the proof of Theorem~\ref{theorem8}, we defined
\begin{equation*}
    A_{1,n+1} = \widetilde{G}(x_n,S_{n+1}) - (\Pi \widetilde{G})(x_n,S_n).
\end{equation*}
Thus, the second inequality in \eqref{Azuma} holds in view of Assumption~\ref{ass:Poisson}. Therefore, the martingale increments are bounded as
\begin{equation*}
    \big|Z_{n+1} - Z_n\big|
    =
    \big|\alpha_n \langle x^\ast - x_n, A_{1,n+1} \rangle\big|
    \leq
    \alpha_n \norm{x^\ast - x_n}\,\norm{A_{1,n+1}}_\ast
    \leq
    2 \alpha_n D \mathbf{G}.
\end{equation*}
Applying the Azuma-Hoeffding inequality \cite{vershynin2020high}, we obtain
\begin{equation*}
    \begin{aligned}
        \mathbb{P}\left(
        Z_{n+1} \geq \frac{\epsilon}{3} \sum_{k=1}^{n} \alpha_k
        \right)
        \leq
        \exp\!\left(
        -\frac{\epsilon^2\left(\sum_{k=1}^{n} \alpha_k\right)^2}
        {72 D^2 \mathbf{G}^2 \sum_{k=1}^{n} \alpha_k^2}
        \right).
    \end{aligned}
\end{equation*}
\end{proof} 
In the next Theorem we derive the sample complexity result.  

\begin{theorem}
Fix an accuracy level $\epsilon > 0$ and a probability level $p \in (0,1)$, and let $\alpha_n = \frac{1}{\sqrt{n}}, n\geq 1$.
Let $D_1 := \sup_{x \in \mathcal{X}} \mathbb{D}_R(x^\ast,x)$ and 
${\displaystyle
C := \frac{3\big(\mathbf{G}+\mathbf{G}^2+2L_\Pi D \mathbf{G}\big)}{2\sigma_R}.
}$ 
Define $n_1$ such that, for all $n \geq n_1$,
\begin{equation*}
 n \geq \max\left\{\frac{9}{2 \epsilon^2} \big(D_1^2 + 16 \mathbf{G}^2 D^2\big), \frac{C^2}{4a^2 \epsilon^2} \big(\ln (1+n)\big)^2, \frac{18 D^2 \mathbf{G}^2}{\epsilon^2} \ln\!\left(\frac{1}{p}\right) \ln (1+n)\right\}.
\end{equation*}
Then, for all $n \geq n_1$,
$
 \mathbb{P}\big(f(z_n) - f^\ast \geq \epsilon\big) \leq p.
$
\end{theorem}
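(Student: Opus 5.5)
This result should follow directly from Theorem~\ref{cvrgncethm} with $\alpha_k = 1/\sqrt{k}$. The plan is to verify its two hypotheses on $\sum_k\alpha_k$, and then to make its exponential tail at most $p$. Everything reduces to elementary bounds on the partial sums $\sum_{k=1}^n k^{-1/2}$ and $\sum_{k=1}^n k^{-1}$.

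\textbf{Elementary sum bounds.} Comparing with integrals, I would record
\[
\sum_{k=1}^n \alpha_k \ge \int_1^{n+1} t^{-1/2}\,dt = 2(\sqrt{n+1}-1) \ge a\sqrt{n}
\]
for a suitable absolute constant $a>0$ (e.g.\ $a=2(\sqrt2-1)$ works for all $n\ge1$). I suspect the $a$ in the statement is this constant. I would also record
\[
\sum_{k=1}^n \alpha_k^2 = \sum_{k=1}^n \tfrac1k \le 1+\ln n,
\]
which I would replace by a multiple of $\ln(1+n)$ to match the form of the statement.

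\textbf{Checking the hypotheses of Theorem~\ref{cvrgncethm}.} The first condition there requires $\sum_k\alpha_k \ge \frac3\epsilon(\mathbb{D}_R(x^\ast,x_1)+4\mathbf{G}D)$. Since $\mathbb{D}_R(x^\ast,x_1)\le D_1$, it suffices that $\sqrt n \gtrsim \frac3\epsilon(D_1+4\mathbf{G}D)$. Squaring and using $(u+v)^2\le 2u^2+2v^2$ gives a condition of the form $n\ge \frac{c}{\epsilon^2}(D_1^2+16\mathbf{G}^2D^2)$, which is the first term in the max. The second condition is $\sum\alpha_k \ge \frac{C}{\epsilon}\sum\alpha_k^2$. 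It suffices that $a\sqrt n \ge \frac{C}{\epsilon}\ln(1+n)$ up to the constant from the harmonic bound, i.e.\ $n\ge \frac{C^2}{a^2\epsilon^2}(\ln(1+n))^2$ up to absolute factors, matching the second term.

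\textbf{Tail bound.} Theorem~\ref{cvrgncethm} then gives
\[
\mathbb{P}\big(f(z_n)-f^\ast\ge\epsilon\big)\le \exp\!\left(-\frac{\epsilon^2(\sum\alpha_k)^2}{72D^2\mathbf{G}^2\sum\alpha_k^2}\right)\le \exp\!\left(-\frac{c'\epsilon^2 n}{72 D^2\mathbf{G}^2\ln(1+n)}\right).
\]
Requiring the right-hand side to be at most $p$ is equivalent to $n \ge \frac{72D^2\mathbf{G}^2}{c'\epsilon^2}\ln(1/p)\ln(1+n)$, which is the third term in the max.

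\textbf{Main obstacle.} The only delicate point is bookkeeping the absolute constants: $a$, the factor relating $1+\ln n$ to $\ln(1+n)$ near $n=1$, and the factor $2$ from squaring a sum. These must reproduce exactly the stated constants $\tfrac{9}{2}$, $\tfrac{1}{4a^2}$ and $18$. I would choose $a$ so that $(\sum\alpha_k)^2 \ge 4n$ asymptotically, using $\sum_{k\le n} k^{-1/2}\approx 2\sqrt n$. I would absorb lower-order slack into the requirement that $n\ge n_1$ be large, noting that the conditions are implicit in $n$ but hold for all sufficiently large $n$ since $\ln(1+n)^2=o(n)$.
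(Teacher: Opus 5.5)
Your route is the paper's: substitute $\alpha_k=1/\sqrt{k}$ into Theorem~\ref{cvrgncethm}, check its two hypotheses by comparing partial sums with integrals, and invert the exponential tail. The gap is in the step you treat as bookkeeping. The constants $\tfrac{9}{2}$, $\tfrac{1}{4a^2}$ and $18$ cannot be recovered, and there is no slack to absorb. Traced back, they encode exactly the two inequalities $\bigl(\sum_{k\le n}\alpha_k\bigr)^2\ge 4n$ and $\sum_{k\le n}\alpha_k^2\le\ln(1+n)$. (In the paper, $a$ is the step-size scale, $\alpha_k=a/\sqrt{k}$, so $a=1$ here. It is not a partial-sum constant as you guessed, and no choice of it helps the first and third terms, which contain no $a$.) Both inequalities fail for every $n$, not only for small $n$: $\sum_{k=1}^n k^{-1/2}\le 1+\int_1^n t^{-1/2}\,dt=2\sqrt{n}-1$ and $\sum_{k=1}^n k^{-1}>\int_1^{n+1}t^{-1}\,dt=\ln(1+n)$. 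The statement must hold for any $n_1$ satisfying the displayed explicit inequalities, not merely for ``$n$ sufficiently large'', so the deficit cannot be pushed into $n_1$. Concretely, pick $p$ so that the third term of the max binds with equality at some $n$. Then the exponent delivered by Theorem~\ref{cvrgncethm} is $\epsilon^2\bigl(\sum_k\alpha_k\bigr)^2/\bigl(72D^2\mathbf{G}^2\sum_k\alpha_k^2\bigr)<4\epsilon^2 n/\bigl(72D^2\mathbf{G}^2\ln(1+n)\bigr)=\ln(1/p)$. So the bound you obtain is strictly larger than $p$ exactly where the statement asserts the probability is at most $p$. Likewise, if $\mathbb{D}_R(x^\ast,x_1)=D_1=4\mathbf{G}D$ and the first term binds, the first hypothesis of Theorem~\ref{cvrgncethm} fails at $n_1$. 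The paper's own proof reaches these constants only by asserting $\sum_{k=1}^n a/\sqrt{k}\ge 2a\sqrt{n}$ and $\sum_{k=1}^n a^2/k\le a^2\ln(1+n)$, both of which point the wrong way. Your correctly oriented bounds expose this, and the remedy is to change the constants, not to absorb slack.

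What your argument does prove is the statement with larger absolute constants, which is all that Corollary~\ref{cor:sample_complexity_bound} needs. For instance, use $\sum_{k=1}^n k^{-1/2}\ge n\cdot n^{-1/2}=\sqrt{n}$ and $\sum_{k=1}^n k^{-1}\le 1+\ln n\le 2\ln(1+n)$ (the last because $en\le(1+n)^2$). Your three steps then give the sufficient conditions $n\ge\tfrac{18}{\epsilon^2}\bigl(D_1^2+16\mathbf{G}^2D^2\bigr)$, $n\ge\tfrac{4C^2}{\epsilon^2}\ln^2(1+n)$ and $n\ge\tfrac{144D^2\mathbf{G}^2}{\epsilon^2}\ln(1/p)\ln(1+n)$. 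With these thresholds every step of your plan goes through as written. State and prove that version, or any version whose constants you actually verify, rather than the displayed one.
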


\begin{proof}
We begin by recalling two useful bounds obtained from the integration test. For any \( n \in \mathbb{N} \), we have
\[
\sum_{k=1}^{n} \frac{a}{\sqrt{k}} \geq 2a \sqrt{n},
\qquad
\sum_{k=1}^{n} \frac{a^2}{k} \leq a^2 \ln(1+n).
\]

Next, we choose \( n_1 \in \mathbb{N} \) such that for all \( n \geq n_1 \),
\[
2a\sqrt{n} \geq \sum_{k=1}^{n} \alpha_k
\;\geq\;
\frac{3}{\epsilon}\Big( \mathbb{D}_R(x^\ast,x_1) + 4 \mathbf{G} D \Big).
\]

Squaring both sides, and simplifying gives  
${\displaystyle
n
\geq 
\frac{9}{2 a^2 \epsilon^2}
\Big( \mathbb{D}_R(x^\ast,x_1)^2 + 16 \mathbf{G}^2 D^2 \Big)}$ $\stackrel{\triangle}{=} n_1.
$ 
Next, we fix \( n_2 \in \mathbb{N} \) such that for all \( n \geq n_2 \),
$
2a\sqrt{n} \;\geq\; C \ln(1+n),
$ 
where the constant \( C \) is defined in the statement of the theorem. 
Rewriting this condition, we obtain
${\displaystyle 
n
\;\geq\;
\frac{C^2}{4a^2} \, \ln^2(1+n).
}$ 
Thus, choosing 
${\displaystyle 
n_2 = \frac{C^2}{4a^2} \, \ln^2(1+n_2),
}$ 
we ensure that for all \( n \geq n_2 \),
\[
\sum_{k=1}^{n} \alpha_k
\;\geq\;
\frac{3\big(\mathbf{G}+\mathbf{G}^2+2L_\Pi D \mathbf{G}\big)}{2\sigma_R\epsilon}
\sum_{k=1}^{n} \alpha_k^2.
\]

Now, applying Theorem~\ref{cvrgncethm}, we obtain that for all
$
n \;\geq\; \max\{n_1,n_2\},
$
the following probability bound holds:
\[
\mathbb{P}\big(f(z_n)-f^\ast \ge \epsilon\big)
\leq
\exp\!\left(
-\frac{\epsilon^2\left(\sum_{k=1}^{n}\alpha_k\right)^2}
{72D^2\mathbf{G}^2\sum_{k=1}^{n}\alpha_k^2}
\right).
\]

Finally, we determine \( n_3 \in \mathbb{N} \) such that for all \( n \geq n_3 \),
\[
\exp\!\left(
-\frac{\epsilon^2\left(\sum_{k=1}^{n}\alpha_k\right)^2}
{72D^2\mathbf{G}^2\sum_{k=1}^{n}\alpha_k^2}
\right)
\;\leq\; p.
\]

Taking logarithms and simplifying, we obtain the sufficient condition
\[
n
\;\geq\;
\ln\!\left(\frac{1}{p}\right)
\, 18 D^2 \mathbf{G}^2 \ln(1+n).
\]

This completes the proof.
\end{proof}
The following Corollary is now immediate.
\begin{corollary}[Sample Complexity Bound]\label{10}
\label{cor:sample_complexity_bound}
Suppose the conditions of Theorem~\ref{cvrgncethm} hold with the stepsize $\alpha_n = \frac{1}{\sqrt{n}}$. Let $\epsilon > 0$ be the target accuracy and $p \in (0,1)$ be the target failure probability. Then, the total number of iterations $N$ required to guarantee $\mathbb{P}\big(f(z_N) - f^\ast \geq \epsilon\big) \leq p$ is bounded by
\begin{equation}
    N = \tilde{\mathcal{O}}\left( \frac{1}{\epsilon^2} \max\left\{ D_1^2 + \mathbf{G}^2 D^2, \, \frac{\mathbf{G}^4 + \mathbf{G}^2(1+2L_\Pi D)^2}{a^2 \sigma_R^2}, \, \mathbf{G}^2 D^2 \ln\!\left(\frac{1}{p}\right) \right\} \right).
\end{equation}
\label{corollaty-smaple}
\end{corollary}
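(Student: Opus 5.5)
The corollary should follow from the preceding theorem, so I would simply turn its three threshold conditions into an explicit $N$ with only logarithmic losses. I would work with $\alpha_n = a/\sqrt{n}$, matching the constant $a$ that appears in the statement (the case $\alpha_n=1/\sqrt n$ is $a=1$). I would invoke the integral-test bounds $\sum_{k=1}^n \alpha_k \ge 2a\sqrt{n} - 2a \gtrsim a\sqrt n$ and $\sum_{k=1}^n\alpha_k^2 \le a^2\ln(1+n)$ from that proof. This lets me take $N=\max\{n_1,n_2,n_3\}$, where $n_1,n_2,n_3$ are the three conditions appearing there. It is then enough to show that each $n_i$ is of the order $\tilde{\mathcal O}(\cdot)$ of the matching entry in the maximum.

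\textbf{First condition.} The deterministic condition $\sum_k\alpha_k\ge \frac{3}{\epsilon}(\mathbb D_R(x^\ast,x_1)+4\mathbf G D)$ has no logarithm. I would bound $\mathbb D_R(x^\ast,x_1)\le D_1$ and use $(u+v)^2\le 2u^2+2v^2$. This gives $n \gtrsim \epsilon^{-2}(D_1^2+\mathbf G^2D^2)$, which is the first entry.

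\textbf{Second condition.} This is the bias condition $2a\sqrt n\ge C\epsilon^{-1}\ln(1+n)$. Expanding $C^2$ gives $C^2\lesssim (\mathbf G^4+\mathbf G^2(1+2L_\Pi D)^2)/\sigma_R^2$. I would then solve the implicit inequality $n\ge K\ln^2(1+n)$ with $K = C^2/(4a^2\epsilon^2)$. The standard fact is that $n = c K\ln^2(K)$ suffices for a suitable absolute constant $c$ once $K\ge e$. I would check this by substitution: $\ln(1+cK\ln^2K)\le 2\ln K + O(\ln\ln K)$. This step yields the second entry up to the polylog hidden in $\tilde{\mathcal O}$.

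\textbf{Third condition.} The high-probability condition is $n\ge 18D^2\mathbf G^2\epsilon^{-2}\ln(1/p)\ln(1+n)$. It has the same implicit form with $K'=18D^2\mathbf G^2\epsilon^{-2}\ln(1/p)$, so $n=cK'\ln K'$ suffices by the same substitution argument. The result is the third entry.

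Taking the maximum, every threshold is bounded by $\tilde{\mathcal O}\big(\epsilon^{-2}\max\{\cdots\}\big)$. Since the preceding theorem guarantees the probability bound for all $n\ge \max\{n_1,n_2,n_3\}$, the claim follows.

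\textbf{Main obstacle.} The only delicate step is inverting the implicit logarithmic inequalities $n\ge K\ln^k(1+n)$ for $k=1,2$ uniformly in the parameters. I would handle it with the lemma that $n\ge 2^k c_k K\ln^k(2^kc_kK)$ implies $n\ge K\ln^k(1+n)$ for $K\ge 1$. I would prove that lemma by monotonicity of $n/\ln^k(1+n)$ for $n\ge e^k$, and let those $\ln$ factors be absorbed into $\tilde{\mathcal O}$.
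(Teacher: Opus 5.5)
Your proposal is correct and follows the paper's route: the paper simply calls the corollary ``immediate'' from the preceding theorem's three thresholds $n_1,n_2,n_3$, and your explicit inversion of $n\ge K\ln^k(1+n)$ is exactly the step the paper leaves implicit. Your correction of the paper's false bound $\sum_{k\le n}a/\sqrt{k}\ge 2a\sqrt{n}$ to $2a\sqrt{n}-2a$ is right, but two related slips remain, both harmless because constants are absorbed and $a=1$ here: the companion bound $\sum_{k\le n}a^2/k\le a^2\ln(1+n)$ is also false (use $a^2(1+\ln n)\le 2a^2\ln(1+n)$), and for general $a$ the bias condition reads $\sqrt{n}\gtrsim aC\epsilon^{-1}\ln(1+n)$, so $a^2$ belongs in the numerator.
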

\begin{remark}
The concentration bound in Corollary~\ref{corollaty-smaple} aligns with known behavior \cite{rsa}. For stochastic mirror descent, $\mathbf{G} = \sup_{x,s}\|G(x,s)\|_\ast$, and with the KL divergence on the probability simplex, the dual norm is the $\infty$-norm, making $\mathbf{G}$ independent of dimension. In contrast, for Euclidean SGD, the dual norm scales as $\|\cdot\|_2 \sim \sqrt{d}$.
Moreover, starting from $x_0 = \frac{1}{d}\mathbf{1}$, the Bregman divergence satisfies $\sup_{y\in\mathcal{X}} \mathbb{D}_R(x_0,y) = \mathcal{O}(\ln d)$, whereas the Euclidean distance yields $\sup_{y\in\mathcal{X}} \|x_0-y\|^2 = \mathcal{O}(1)$. 
Consequently, on the probability simplex, Euclidean SGD leads to sample complexity $N(\epsilon,p)=\mathcal{O}\big(\tfrac{1}{\epsilon^2} d^2 \ln(\tfrac{1}{p})\big)$, while with KL geometry this improves to $N(\epsilon,p)=\mathcal{O}\big(\tfrac{1}{\epsilon^2} (\ln d) \ln(\tfrac{1}{p})\big)$.
\end{remark}

\section{Finite-Time Analysis: Non-Convex Objectives on the Probability Simplex}

In this section, we revisit \eqref{eq:main_problem1} under the assumption that the constraint set $\mathcal{X}$ is the probability simplex. 
Our primary focus is solving the following optimization problem \eqref{eq:main_problem},
where the constraint set $\mathcal{X}$ is the probability simplex, defined as
\begin{equation*}
    \mathcal{X} = \left\{ x \in \mathbb{R}^d \;\middle|\; \sum_{i=1}^d x[i] = 1, \quad x[i] \geq 0 \right\}.
\end{equation*}

 This constraint set naturally arises in several applications where the decision variable represents a probability distribution, for example in reinforcement learning, where the action space is itself a probability distribution \cite{agarwal2021theory,xiao2022convergence,li2025stochastic}. Consequently, any feasible point must lie in a non-Euclidean geometry, which motivates the use of mirror descent–type methods.
We assume that the objective function $f$ is possibly non-convex and satisfies Assumptions~\ref{assumpton3}, \ref{ass8}, \ref{ass:Poisson}, and \ref{assumptionLipschitzGtilde} with respect to the $1$-norm and its corresponding dual norm, namely the $\infty$-norm.
\medskip

\noindent
Since the domain is the probability simplex, a natural and standard choice of the mirror map is the negative entropy function given by $ R(x) = \sum_{i=1}^d x[i] \ln x[i].$

This choice is particularly convenient because it respects the geometry of the simplex. The associated Bregman divergence is  computed as
\begin{equation*}
    \mathbb{D}_R(x, y) = \sum_{i=1}^d x[i] \ln \frac{x[i]}{y[i]}.
\end{equation*}

\medskip

\noindent
With this choice of mirror map, the mirror descent updates admit a particularly simple and intuitive form. In fact, it can be verified (see \cite{bubeck2011}) that the iteration \eqref{mirror_descent1} can be written component-wise as
\begin{equation*}
    x_{n+1}[i] = \frac{x_n[i] \exp\big(- \alpha_n g(x_n,S_n)[i]\big)}{\sum\limits_{j=1}^d x_n[j] \exp\big(- \alpha_n g(x_n,S_n)[j]\big)}.
\end{equation*}

\medskip

\noindent
An important structural property of this update (see \cite{bubeck2011}) is the following. If the initial point satisfies $x_1 \in \mathrm{reint}(\mathcal{X})$, then all subsequent iterates remain strictly in the relative interior, that is, $x_n \in \mathrm{reint}(\mathcal{X}) \quad \text{for all } n.$
This observation is particularly useful because it allows us to avoid boundary-related complications. 
We obtain the following two important consequences of the above observations.

First, since we are away from the relative boundary, the tangent cone at any point behaves like a half space. As a result, the non-Euclidean projection onto the tangent cone, defined in \eqref{PDSdeddi}, becomes a linear operator. That is, for any vectors $u,v \in \mathbb{R}^d$ and scalar $\alpha$,
\begin{equation*}
    \mathcal{P}_{\mathcal{T}_\mathcal{X}(x)}^x (\alpha u + v)  
    = \alpha \mathcal{P}_{\mathcal{T}_\mathcal{X}(x)}^x (u) 
    + \mathcal{P}_{\mathcal{T}_\mathcal{X}(x)}^x (v).
\end{equation*}

\medskip

\noindent
Second, the tangent cone itself becomes independent of the point $x$. Indeed, for every $x \in \mathrm{reint}(\mathcal{X})$, the tangent cone is given by
$
    \mathcal{T}_\mathcal{X}(x) = \left\{ \nu \in \mathbb{R}^d \;\middle|\; \mathrm{1}^\top \nu = 0 \right\}.
$
\begin{lemma}
Let $x,y \in \mathrm{reint}(\mathcal{X})$. For each point, define
\begin{equation*}
    \nu_x = \mathcal{P}^{x}_{\mathcal{T}_{\mathcal{X}}(x)}
    \Big(
    -\nabla^2 R(x)^{-1} \nabla f(x)
    \Big),
    \qquad
    \nu_y = \mathcal{P}^{y}_{\mathcal{T}_{\mathcal{X}}(y)}
    \Big(
    -\nabla^2 R(y)^{-1} \nabla f(y)
    \Big).
\end{equation*}
Then, the mapping $x \mapsto \nu_x$ is Lipschitz continuous in the $\ell_1$-norm. In particular, we have
\begin{equation*}
    \norm{\nu_x-\nu_y}_1 \leq L_\nu \norm{x-y}_1,
\end{equation*}
where $L_\nu = 2 L + 3 \mathbf{G}$.
\label{Lemma1}
\end{lemma}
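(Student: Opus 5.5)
The plan is to compute $\nu_x$ in closed form for the negative-entropy mirror map on the relative interior, and then bound the difference $\nu_x-\nu_y$ term by term.

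Here $\nabla^2 R(x)=\mathrm{diag}(1/x[i])$, so $\nabla^2 R(x)^{-1}=\mathrm{diag}(x)$. The tangent cone is the fixed hyperplane $\{\nu \mid \mathbf{1}^\top\nu=0\}$. The projection in \eqref{PDSdeddi1} minimizes $(\nu+\mathrm{diag}(x)g)^\top \mathrm{diag}(1/x)(\nu+\mathrm{diag}(x)g)$ over this hyperplane, where $g=\nabla f(x)$. A Lagrange multiplier argument gives $\mathrm{diag}(1/x)(\nu+\mathrm{diag}(x)g)=\lambda\mathbf{1}$, i.e.\ $\nu=\mathrm{diag}(x)(\lambda\mathbf{1}-g)$. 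Imposing $\mathbf{1}^\top\nu=0$ and using $\sum_i x[i]=1$ gives $\lambda=\langle x,g\rangle$. Hence
\[
\nu_x[i]=-x[i]\big(\nabla f(x)[i]-\langle x,\nabla f(x)\rangle\big),
\]
which is the replicator / Shahshahani-gradient form.

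Next I would split the difference as
\[
\nu_x-\nu_y=-\mathrm{diag}(x)\big(\nabla f(x)-\nabla f(y)\big)-\mathrm{diag}(x-y)\nabla f(y)+x\langle x,\nabla f(x)\rangle-y\langle y,\nabla f(y)\rangle.
\]
Each piece is bounded in $\ell_1$ using $\|x\|_1=1$, Hölder ($\ell_1$/$\ell_\infty$), Assumption~\ref{ass8} in the form $\|\nabla f(x)-\nabla f(y)\|_\infty\le L\|x-y\|_1$, and $\|\nabla f(y)\|_\infty\le \mathbf{G}$. The last bound holds because $\nabla f(y)$ is an average of $G(y,\cdot)$ under $\mu_y$, so Assumption~\ref{Assumption 3} applies.

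\begin{itemize}
\item First term: at most $\|x\|_1\|\nabla f(x)-\nabla f(y)\|_\infty\le L\|x-y\|_1$.
\item Second term: at most $\mathbf{G}\|x-y\|_1$.
\item Third term: write it as $(x-y)\langle x,\nabla f(x)\rangle+y\langle x-y,\nabla f(x)\rangle+y\langle y,\nabla f(x)-\nabla f(y)\rangle$. The three pieces contribute at most $\mathbf{G}\|x-y\|_1$, $\mathbf{G}\|x-y\|_1$ and $L\|x-y\|_1$ respectively.
\end{itemize}

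Summing these gives $(2L+3\mathbf{G})\|x-y\|_1$, which is exactly $L_\nu$.

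The main obstacle is the first step: justifying that the non-Euclidean projection onto the tangent cone reduces to this closed form. This requires checking that on the relative interior the constraint is only the hyperplane, with no active inequality constraints, so the KKT stationarity condition with a single multiplier is both necessary and sufficient. The latter holds because the objective is strictly convex, as $\mathrm{diag}(1/x)\succ0$. Once the formula is in hand, the remaining estimates are routine.
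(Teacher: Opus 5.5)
Your proposal is correct and follows the paper's proof. You use the same Lagrange-multiplier closed form $\nu_x=-\mathrm{diag}(x)\nabla f(x)+\langle x,\nabla f(x)\rangle x$, the same split of the diagonal part, and a three-term telescoping of $\langle x,\nabla f(x)\rangle x-\langle y,\nabla f(y)\rangle y$ that gives the same constant $2L+3\mathbf{G}$; the only difference is that you replace $x$ by $y$ in the vector first and in the gradient last, the reverse of the paper's order, which does not affect the argument.
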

\begin{proof}
  Consider
\begin{equation}
\begin{aligned}
    \nu_x 
    &= \mathcal{P}^{x}_{\mathcal{T}_{\mathcal{X}}(x)}
    \Big(
    -\nabla^2 R(x)^{-1} \nabla f(x)
    \Big) \\
    &= \argmin\limits_{\nu \in \mathcal{T}_\mathcal{X}(x)} 
    (\nu+ \nabla^2 R(x)^{-1} \nabla f(x))^\top 
    \nabla^2 R(x) 
    (\nu+ \nabla^2 R(x)^{-1} \nabla f(x)).
\end{aligned}
\label{simopt}
\end{equation}

Recall that
$
\mathcal{T}_\mathcal{X}(x) = \left\{ \nu \in \mathbb{R}^d \;\middle|\; \mathrm{1}^\top \nu = 0 \right\}.
$ 
We define the Lagrangian as
\begin{equation*}
    L(\nu,\lambda)  
    = (\nu+ \nabla^2 R(x)^{-1} \nabla f(x))^\top 
    \nabla^2 R(x) 
    (\nu+ \nabla^2 R(x)^{-1} \nabla f(x))  
    + \lambda \, \mathrm{1}^\top \nu.
\end{equation*}

Since the objective is strongly convex, $\nu_x$ is the unique solution if and only if
\begin{equation*}
    \frac{\partial L}{\partial \nu} = 0, 
    \qquad 
    \frac{\partial L}{\partial \lambda} = 0.
\end{equation*}
By solving the above two algebraic equations 
$
    \nu_x 
    = - \mathrm{diag}(x) \nabla f(x) 
    + \langle x, \nabla f(x) \rangle \, x.
$
In a similar manner, for any $y \in \mathrm{reint}(\mathcal{X})$, we have
$
    \nu_y 
    = - \mathrm{diag}(y) \nabla f(y) 
    + \langle y, \nabla f(y) \rangle \, y.
$ 
We decompose the difference as
\begin{equation}
        \nu_x - \nu_y 
        = \underbrace{- \mathrm{diag}(x) \nabla f(x)  +  \mathrm{diag}(y) \nabla f(y)}_{\mathbf{T}_1}
    + \underbrace{\langle x, \nabla f(x) \rangle \, x -   \langle y, \nabla f(y) \rangle \, y}_{\mathbf{T}_2}.
    \label{nux}
\end{equation}

\textbf{Bound on $\mathbf{T}_1$}
\begin{equation*}
\begin{aligned}
    & \norm{- \mathrm{diag}(x) \nabla f(x)  +  \mathrm{diag}(y) \nabla f(y)}_1 \\
    \leq {} & \norm{- \mathrm{diag}(x) \nabla f(x)  +  \mathrm{diag}(x) \nabla f(y)}_1 \\
    &\quad + \norm{\mathrm{diag}(x) \nabla f(y) - \mathrm{diag}(y) \nabla f(y)}_1 \\
    \leq {} & \norm{x}_1 \norm{\nabla f(x) - \nabla f(y)}_\infty 
    + \norm{\nabla f(y)}_\infty \norm{x-y}_1 \\
    \leq {} & L \norm{x-y}_1 + \mathbf{G} \norm{x-y}_1.
\end{aligned}
\end{equation*}

\textbf{Bound on $\mathbf{T}_2$}
\begin{equation*} 
\begin{aligned}
& \norm{\langle x, \nabla f(x) \rangle \, x - \langle y, \nabla f(y) \rangle \, y}_1 \leq \underbrace{\norm{\langle x, \nabla f(x) \rangle \, x - \langle x, \nabla f(y) \rangle \, x}_1}_{\mathbf{T}_3} \\ & \quad \quad + \underbrace{\norm{ \langle x, \nabla f(y) \rangle \, x - \langle y, \nabla f(y) \rangle \, x}_1}_{\mathbf{T}_4} + \underbrace{\norm{\langle y, \nabla f(y) \rangle \, x - \langle y, \nabla f(y) \rangle \, y}_1}_{\mathbf{T}_5} \end{aligned} 
\end{equation*}

\textbf{Bound on $\mathbf{T}_3$}
\begin{equation*}
\begin{aligned}
    \norm{\langle x, \nabla f(x) \rangle \, x -   \langle x, \nabla f(y) \rangle \, x}_1
    &\leq \norm{x}_1 \norm{\nabla f(x) - \nabla f(y)}_\infty \leq L \norm{x-y}_1.
\end{aligned}
\end{equation*}

\textbf{Bound on $\mathbf{T}_4$}
\begin{equation*}
\begin{aligned}
    \norm{ \langle x, \nabla f(y) \rangle \, x -  \langle y, \nabla f(y) \rangle \, x}_1
    &\leq \norm{x-y}_1 \norm{\nabla f(y)}_\ast \leq \mathbf{G} \norm{x-y}_1.
\end{aligned}
\end{equation*}

\textbf{Bound on $\mathbf{T}_5$}
\begin{equation*}
\begin{aligned}
    \norm{\langle y, \nabla f(y) \rangle \, x - \langle y, \nabla f(y) \rangle \, y}_1
    &\leq \norm{y}_1 \norm{\nabla f(y)}_\infty \norm{x-y}_1 \leq \mathbf{G} \norm{x-y}_1.
\end{aligned}
\end{equation*}
In the foregoing bounds, we make use of the fact that $\|x\|_1 \leq 1$, since ${\cal X}$ is the probability simplex.  
Thus,
${\displaystyle 
    \norm{\nu_x-\nu_y}_1 \leq (2L + 3 \mathbf{G}) \norm{x-y}_1}$, 
completing the proof.
\end{proof}
Before presenting the main theorem of this section, we highlight an important conceptual point regarding the notion of stationarity.

In standard unconstrained Euclidean optimization, the stationarity gap is naturally measured by the Euclidean norm of the gradient, namely $\norm{\nabla f(x)}_2$. However, in our constrained and non-Euclidean setting, this measure is no longer appropriate. There are two key reasons for this. First, the Euclidean gradient does not account for the active constraints imposed by the feasible set $\mathcal{X}$. Second, it ignores the local geometry induced by the mirror map $R$.


To address this issue, we measure stationarity using the local norm induced by the mirror map, applied to the projected vector field. Specifically, we define the stationarity gap as\\
$
\text{Gap}(x) := \|\nu_x\|_x = \sqrt{\nu_x^\top \nabla^2 R(x) \nu_x},
$
where
${\displaystyle 
\nu_x = \mathcal{P}_{\mathcal{T}_\mathcal{X}(x)}^x \big( -\nabla^2 R(x)^{-1} \nabla f(x) \big)
}$.
%
%
This definition has a natural interpretation: the vector $\nu_x$ represents the feasible descent direction after incorporating both the constraints and the underlying geometry. Consequently, the quantity $\|\nu_x\|_x$ measures how far the point $x$ is from the stationarity in a manner consistent with the geometry of the problem.
In particular, $x$ is a stationary point if and only if $\nu_x = 0$ (see Theorem \ref{stability them} for more details).
%
%
Finally, it is worth noting that mirror descent can be interpreted as gradient descent on a Riemann manifold equipped with the metric induced by $\nabla^2 R(x)$. The above notion of a stationarity gap is standard in deterministic optimization on Riemann manifolds \cite{Boumal_2023}.
\begin{theorem}[Non-asymptotic convergence under Markov noise]
\label{thm:main_convergence}

Fix $\epsilon > 0$. Let $n_0$ be sufficiently large, such that for all $n \geq n_0$,
\begin{equation*}
\begin{aligned}
    \sum_{k=1}^{n} \alpha_k
    &\geq \max \Bigg\{ \frac{3}{\epsilon^2}\Big( f(x_1) + 3 \mathbf{G} \Big), \frac{3}{\epsilon^2}  \left( 
    L_\nu \mathbf{G}^2 
    + L_{\Pi} \mathbf{G}^2 
    + \frac{L \mathbf{G}^2}{2} 
    \right) 
    \sum_{k= 1}^n \alpha_k^2  \Bigg\}.
    \end{aligned}
\end{equation*}

Then, for all $n \geq n_0$ the following high-probability bound holds:
\begin{equation*}
    \mathbb{P}\Big(\min_{1 \leq k \leq n} \text{Gap}(x_k) \geq \epsilon \Big)
    \leq 
    \exp \left( 
    - \frac{\epsilon^4 \left(\sum\limits_{k=1}^{n} \alpha_k\right)^2}
    {18 \mathbf{G}^4 \sum\limits_{k=1}^{n} \alpha_k^2 } 
    \right).
\end{equation*}
\label{concentration - nonconvex}
\end{theorem}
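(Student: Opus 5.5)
The argument is a descent-lemma analysis along the entropic mirror-descent trajectory, combined with the Poisson-equation noise decomposition of Theorem~\ref{theorem8} and the Azuma--Hoeffding inequality.

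\textbf{Step 1: one-step descent.} Since $x_k\in\mathrm{reint}(\mathcal{X})$, Corollary~\ref{TangentConeRepresentation} lets us write $x_{k+1}-x_k=\alpha_k(\nu_k+b_k)$. Here $\nu_k$ is the projection of $-\nabla^2R(x_k)^{-1}G(x_k,S_k)$, and the projection is linear in the interior. Decompose $G(x_k,S_k)=\nabla f(x_k)+A_{1,k+1}+A_{2,k+1}$. Then
\[
\nu_k=\nu_{x_k}+\mathcal{P}^{x_k}_{\mathcal{T}_{\mathcal{X}}}\big(-\nabla^2R(x_k)^{-1}(A_{1,k+1}+A_{2,k+1})\big).
\]
By Assumption~\ref{ass8} (in the $\ell_1/\ell_\infty$ pair),
\[
f(x_{k+1})\le f(x_k)+\langle\nabla f(x_k),x_{k+1}-x_k\rangle+\tfrac L2\|x_{k+1}-x_k\|_1^2 ,
\]
and \eqref{eqna} with $\sigma_R=1$ (Pinsker) gives $\|x_{k+1}-x_k\|_1\le\alpha_k\mathbf{G}$. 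The optimality identity in the proof of Theorem~\ref{stability them} gives $\langle\nabla f(x_k),\nu_{x_k}\rangle=-\|\nu_{x_k}\|_{x_k}^2=-\mathrm{Gap}(x_k)^2$. For the noise terms, use the explicit formula $\mathcal{P}^{x}(-\nabla^2R(x)^{-1}a)=-\mathrm{diag}(x)a+\langle x,a\rangle x$ from Lemma~\ref{Lemma1}. This makes $\langle\nabla f(x_k),\mathcal{P}(\cdot)\rangle$ a bilinear expression $-\langle \nu\text{-type term},a\rangle$, which can be written as $\langle -\widetilde\nu_{x_k},A_{\cdot,k+1}\rangle$ with $\widetilde\nu_x:=\mathrm{diag}(x)\nabla f(x)-\langle x,\nabla f(x)\rangle x=-\nu_x$, since the operator is self-adjoint. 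Summing over $k$ and telescoping yields
\[
\sum_{k=1}^n\alpha_k\mathrm{Gap}(x_k)^2\le f(x_1)-f^\ast+\sum_k\alpha_k\langle\nu_{x_k},A_{1,k+1}\rangle+\sum_k\alpha_k\langle\nu_{x_k},A_{2,k+1}\rangle+\tfrac{L\mathbf G^2}{2}\sum_k\alpha_k^2 .
\]
The $o(\alpha_k)$ term $b_k$ is absorbed into the constants; for the entropic map it is exact, up to normalization.

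\textbf{Step 2: the Markov bias term.} This is the main obstacle. I will mimic Step~2 of Theorem~\ref{theorem8}, replacing $x^\ast-x_k$ by $\nu_{x_k}$. First split $A_{2,k+1}=T_{1,k+1}+T_{2,k+1}$. For $T_1$, apply Abel summation with $b_k=\alpha_k\nu_{x_k}$. The boundary terms are bounded by $\|\nu_x\|_1\le 2\mathbf G$ and $\|\widetilde G\|_\infty\le\mathbf G$. For the increments, use
\[
\|b_{k+1}-b_k\|_1\le(\alpha_k-\alpha_{k+1})\,2\mathbf G+\alpha_kL_\nu\|x_{k+1}-x_k\|_1 ,
\]
which relies on Lemma~\ref{Lemma1} and the monotonicity of the step sizes. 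For $T_2$, use Assumption~\ref{assumptionLipschitzGtilde}, which gives $\le\alpha_k\,2\mathbf G\,L_\Pi\alpha_k\mathbf G$. Together these give
\[
\Big|\sum_k\alpha_k\langle\nu_{x_k},A_{2,k+1}\rangle\Big|\lesssim 3\mathbf G+(L_\nu+L_\Pi)\mathbf G^2\sum_k\alpha_k^2 .
\]
Getting these constants to land exactly as in the statement, including the normalization $f\ge0$ implicit in "$f(x_1)+3\mathbf G$", is the delicate bookkeeping.

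\textbf{Step 3: concentration.} On the event $\min_k\mathrm{Gap}(x_k)\ge\epsilon$, the left side is at least $\epsilon^2\sum_k\alpha_k$. The choice of $n_0$ makes the deterministic terms at most $\tfrac{2\epsilon^2}{3}\sum_k\alpha_k$. Hence the martingale $\sum_k\alpha_k\langle\nu_{x_k},A_{1,k+1}\rangle$, which is a martingale since $\nu_{x_k}$ is $\mathcal F_k$-measurable, must exceed $\tfrac{\epsilon^2}{3}\sum_k\alpha_k$. Its increments are bounded by $c\,\alpha_k\mathbf G^2$, using $\|\nu_x\|_1\le 2\mathbf G$ and $\|A_1\|_\infty\le 2\mathbf G$, possibly sharpened by centering. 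Azuma--Hoeffding then gives the claimed $\exp\!\big(-\epsilon^4(\sum\alpha_k)^2/(18\mathbf G^4\sum\alpha_k^2)\big)$, with the increment constant tuned to match $18$.
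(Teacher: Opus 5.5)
Your route is the paper's: the descent lemma along the tangent-cone representation, linearity of the projection in the relative interior, the Poisson split into $A_{1,k+1}+A_{2,k+1}$, Abel summation for the bias, and Azuma--Hoeffding for the martingale part. Rewriting the martingale term as $\inprod{\nu_{x_k}}{A_{1,k+1}}$ by self-adjointness gives the same quantity as the paper's $\inprod{\nabla f(x_k)}{\mathbf{T}_{2,k}}$.

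The gap is quantitative, but the statement is an inequality with explicit constants. The places where you defer to ``bookkeeping'' and ``tuning'' are exactly where your bounds fail to deliver them. With $\norm{\nu_x}_1\le 2\mathbf{G}$ and $\norm{A_{1,k+1}}_\infty\le 2\mathbf{G}$, your increments are $4\alpha_k\mathbf{G}^2$, so Azuma gives $288$ instead of $18$ in the denominator. Your Step~2 likewise produces $6\alpha_1\mathbf{G}^2$ and $2L_\Pi\mathbf{G}^2$.

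\textbf{First missing ingredient.} You need $\norm{\nu_x}_1\le\mathbf{G}$. The paper obtains this in Proposition~\ref{ProposMarkov} from $\norm{\nu_x}_1^2\le\sum_i\nu_x[i]^2/x[i]=\nu_x^\top\nabla^2R(x)\nu_x=-\inprod{\nabla f(x)}{\nu_x}\le\mathbf{G}\norm{\nu_x}_1$, using Cauchy--Schwarz with $\sum_i x[i]=1$. With this bound, your Abel summation gives exactly $3\alpha_1\mathbf{G}^2+(L_\nu+L_\Pi)\mathbf{G}^2\sum_k\alpha_k^2$. This is $3\mathbf{G}^2$, not the $3\mathbf{G}$ you carried over from the statement. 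The boundary terms are products of two $\mathbf{G}$-bounded factors, and the paper's own proof and its corollary both use $3\mathbf{G}^2$.

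\textbf{Second missing ingredient.} This one is in the exponent. Even with $\norm{\nu_x}_1\le\mathbf{G}$, the available almost-sure bound on $|\alpha_k\inprod{\nu_{x_k}}{A_{1,k+1}}|$ is $2\alpha_k\mathbf{G}^2$. Centering via $\mathbf{1}^\top\nu_x=0$ does not improve it, since the coordinates of $A_{1,k+1}$ can span $[-2\mathbf{G},2\mathbf{G}]$. The symmetric Azuma inequality then gives $72$. The paper's write-up has the same slip: it bounds the increment by $2\alpha_k\mathbf{G}^2$ and then states $18$.

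What actually yields $18$ is the range form of Azuma--Hoeffding. Conditionally on $\mathcal{F}_k$, $\alpha_k\inprod{\nu_{x_k}}{\widetilde{G}(x_k,S_{k+1})}\in[-\alpha_k\mathbf{G}^2,\alpha_k\mathbf{G}^2]$, and the subtracted term $\alpha_k\inprod{\nu_{x_k}}{(\Pi\widetilde{G})(x_k,S_k)}$ is $\mathcal{F}_k$-measurable. So each increment lies in a predictable interval of length $2\alpha_k\mathbf{G}^2$. Hoeffding's lemma then gives $\mathbb{P}(Z_n\ge t)\le\exp\big(-2t^2/(4\mathbf{G}^4\sum_k\alpha_k^2)\big)$, which at $t=\frac{\epsilon^2}{3}\sum_k\alpha_k$ is exactly the claimed bound.

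\textbf{The remainder term.} Your remark that the entropic update is ``exact, up to normalization'' is not right. The second $t$-derivative of $x_k[i]e^{-t y_k[i]}/\sum_j x_k[j]e^{-t y_k[j]}$ has $\ell_1$-norm at most twice the variance of $y_k$ under the tilted weights, hence at most $2\mathbf{G}^2$. So $x_{k+1}-x_k$ differs from its first-order term $\alpha_k\mathcal{P}^{x_k}_{\mathcal{T}_{\mathcal{X}}(x_k)}\big(-\nabla^2R(x_k)^{-1}y_k\big)$ by up to $\alpha_k^2\mathbf{G}^2$ in $\ell_1$. This adds $\mathbf{G}^3\sum_k\alpha_k^2$ to the descent inequality. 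The paper drops this term silently, so the stated second threshold ignores it; if you carry it, as you propose, it must be added there.
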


\begin{proof}
Since the objective function $f$ is $L$-smooth, we can invoke the standard descent lemma, which gives
\begin{equation}
    f(x_{n+1}) \leq f(x_n) + \inprod{\nabla f(x_n)}{x_{n+1}-x_n} + \frac{L}{2}\norm{x_{n+1}-x_n}^2.
    \label{lsmoothness}
\end{equation}
\noindent
From Corollary \ref{TangentConeRepresentation}, the iterate update can be written as
\begin{equation*}
    x_{n+1}-x_n 
    = \alpha_n \mathcal{P}_{\mathcal{T}_{\mathcal{X}}(x_n)}^{(x_n)}\!\big(-\nabla^2 R(x_n)^{-1}y_n\big) 
    + \alpha_n b_n,
\end{equation*}
where $y_n = G(x_n,S_n)$ and $b_n = o(\alpha_n)$ collects higher-order terms. 
\medskip
\noindent
Next, using Theorem \ref{theorem8}, the stochastic vector field admits the decomposition
$
    y_n = \nabla f(x_n) + A_{1,n+1} + A_{2,n+1}.
$ 
We treat each term separately.
\medskip
\noindent
Since we are working in the relative interior of the simplex, the tangent cone is the linear space
$
\mathcal{T}_{\mathcal{X}}(x_n) = \left\{ \nu \in \mathbb{R}^d \;\middle|\; \mathrm{1}^\top \nu = 0 \right\}.
$ 
As a consequence, the projection onto the tangent cone is a linear operator.
Applying this linearity, we obtain
{\small
\begin{align*}
    &\mathcal{P}_{\mathcal{T}_{\mathcal{X}}(x_n)}^{(x_n)}\!\big(-\nabla^2 R(x_n)^{-1}y_n\big) 
    = \underbrace{\mathcal{P}_{\mathcal{T}_{\mathcal{X}}(x_n)}^{(x_n)}\!\big(-\nabla^2 R(x_n)^{-1}\nabla f(x_n)\big)}_{\nu_n} \\
    &\quad \quad\quad \quad + \underbrace{\mathcal{P}_{\mathcal{T}_{\mathcal{X}}(x_n)}^{(x_n)}\!\big(-\nabla^2 R(x_n)^{-1}A_{1,n+1}\big)}_{\mathbf{T}_{2,n}} 
     + \underbrace{\mathcal{P}_{\mathcal{T}_{\mathcal{X}}(x_n)}^{(x_n)}\!\big(-\nabla^2 R(x_n)^{-1}A_{2,n+1}\big)}_{\mathbf{T}_{3,n}}.
\end{align*}}
\medskip
\noindent
\textbf{Analysis of the stationarity measure:} 
We now analyze the quantity $\nu_n$, which plays the role of the projected descent direction. From the first-order optimality condition, we have
\begin{equation*}
    \Big(\nabla^2 R(x_n)\big(\nu_n + \nabla^2 R(x_n)^{-1}\nabla f(x_n)\big)\Big)^\top 
    (\nu - \nu_n) \geq 0 
    \quad \forall \nu \in \mathcal{T}_{\mathcal{X}}(x_n).
\end{equation*}
First, choosing $\nu = 0$, we obtain
$
    \Big(\nabla^2 R(x_n)\big(\nu_n + \nabla^2 R(x_n)^{-1}\nabla f(x_n)\big)\Big)^\top 
    (-\nu_n) \geq 0.
$ 
Next, choosing $\nu = 2\nu_n$, we obtain
$
    \Big(\nabla^2 R(x_n)\big(\nu_n + \nabla^2 R(x_n)^{-1}\nabla f(x_n)\big)\Big)^\top 
    (\nu_n) \geq 0.
$
Combining the two, we obtain 
$
    \Big(\nabla^2 R(x_n)\big(\nu_n + \nabla^2 R(x_n)^{-1}\nabla f(x_n)\big)\Big)^\top 
    \nu_n = 0.
$ 
Expanding the above expression yields
$
    \nu_n^\top \nabla^2 R(x_n)\nu_n + \nabla f(x_n)^\top \nu_n = 0,
$
which gives
$
    \nabla f(x_n)^\top \nu_n = - \nu_n^\top \nabla^2 R(x_n) \nu_n.
$ 

\medskip
\noindent
\textbf{Telescopic sum and complexity bound}: 
Substituting this relation into the descent inequality \eqref{lsmoothness}, and summing over $k=1$ to $n$, we obtain a telescoping relation.
\medskip
\noindent
More precisely, {\small 
\begin{align*}
    \bigg( \min_{1 \leq k \leq n} \|\nu_k\|_{x_k}^2 \bigg) 
    \sum_{k=1}^n \alpha_k 
    &\leq f(x_1) - \inf f  + \sum_{k=1}^n \alpha_k \inprod{\nabla f(x_k)}{\mathbf{T}_{2,k}} \\
     & \quad + \sum_{k=1}^n \alpha_k \inprod{\nabla f(x_k)}{\mathbf{T}_{3,k}}  + \frac{L \mathbf{G}^2}{2\sigma_R^2} \sum_{k=1}^{n} \alpha_k^2.
\end{align*}}
Next, assume that
${\displaystyle 
\min_{1 \leq k \leq n} \|\nu_k\|_{x_k} \geq \epsilon
}$.
Then, clearly
${\displaystyle 
\min_{1 \leq k \leq n} \|\nu_k\|_{x_k}^2 \geq \epsilon^2
}$. 
Substituting this lower bound into the inequality obtained from the telescoping argument, we obtain
\begin{equation*}
   \epsilon^2 
    \sum_{k=1}^n \alpha_k 
    \leq f(x_1) - \inf f 
    + \sum_{k=1}^n \alpha_k \inprod{\nabla f(x_k)}{\mathbf{T}_{2,k}} 
    + \sum_{k=1}^n \alpha_k \inprod{\nabla f(x_k)}{\mathbf{T}_{3,k}} 
    + \frac{L \mathbf{G}^2}{2\sigma_R^2} \sum_{k=1}^{n} \alpha_k^2.
\end{equation*}

\medskip

\noindent
Next, we use the bound on the Markov bias term $\mathbf{T}_{3,k}$ (with $\alpha_1 \leq 1$), as established in Proposition \ref{ProposMarkov}. Substituting this bound into the above inequality yields
{\small
\begin{align*}
   \epsilon^2 
    \sum_{k=1}^n \alpha_k 
    &\leq f(x_1) + \frac{3 \mathbf{G}^2}{\sigma_R} 
    + \sum_{k=1}^n \alpha_k \inprod{\nabla f(x_k)}{\mathbf{T}_{2,k}}
    + \bigg( \frac{L_\nu \mathbf{G}^2}{\sigma_R} 
    + \frac{L_{\widetilde{G}} \mathbf{G}^2}{\sigma_R} 
    + \frac{L \mathbf{G}^2}{2 \sigma_R^2} \bigg) 
    \sum_{k=1}^{n} \alpha_k^2.
\end{align*}}
Let $n_1 \in \mathbb{N}$ be the smallest integer such that
${\displaystyle 
    f(x_1) + \frac{3 \mathbf{G}^2}{\sigma_R} 
    \leq \frac{\epsilon^2}{3} 
    \sum_{k=1}^{n} \alpha_k.
}$
Similarly, let $n_2 \in \mathbb{N}$ be the smallest integer such that 
${\displaystyle 
    \bigg( \frac{L_\nu \mathbf{G}^2}{\sigma_R} 
    + \frac{L_{\widetilde{G}} \mathbf{G}^2}{\sigma_R} 
    + \frac{L \mathbf{G}^2}{2 \sigma_R^2} \bigg) 
    \sum_{k=1}^n \alpha_k^2 
    \leq \frac{\epsilon^2}{3} 
    \sum_{k=1}^{n} \alpha_k.
}$ 
Then, for all $n \geq n_0 := \max\{n_1,n_2\}$, if the event
${\displaystyle
\left\{ \min_{1\leq k \leq n} \|\nu_k\|_{x_k} \geq \epsilon \right\}
}$
occurs, then 
${\displaystyle
    \sum_{k=1}^{n} \alpha_k \inprod{\nabla f(x_k)}{\mathbf{T}_{2,k}} 
    \geq \frac{\epsilon^2}{3} 
    \sum_{k=1}^{n} \alpha_k.
}$ Consequently, it follows that 
\begin{equation*}
    \mathbb{P}\Big(\min_{1\leq k \leq n} \|\nu_k\|_{x_k} \geq \epsilon \Big) 
    \leq 
    \mathbb{P}\left(
    \sum_{k=1}^{n} \alpha_k 
    \inprod{\nabla f(x_k)}{\mathbf{T}_{2,k}} 
    \geq \frac{\epsilon^2}{3} 
    \sum_{k=1}^{n} \alpha_k 
    \right).
\end{equation*}
\textbf{Analysis of the term $\mathbf{T}_{2,n}$}: 
We now analyze the contribution of the stochastic term $\mathbf{T}_{2,n}$. 
Recall from Theorem \ref{theorem8} that the sequence $\{A_{1,k+1}\}$ forms a martingale difference sequence. Since the projection onto the tangent cone is linear, we have
\begin{align*}
    \mathbb{E}[\mathbf{T}_{2,k} \mid \mathcal{F}_k] 
    &= \mathbb{E}\Big[\mathcal{P}_{\mathcal{T}_{\mathcal{X}}(x_k)}^{(x_k)}\!\big(-\nabla^2 R(x_k)^{-1}A_{1,k+1}\big) \,\Big|\, \mathcal{F}_k\Big] \\ 
    &= \mathcal{P}_{\mathcal{T}_{\mathcal{X}}(x_k)}^{(x_k)}\!\Big( -\nabla^2 R(x_k)^{-1} \mathbb{E}[A_{1,k+1} \mid \mathcal{F}_k ] \Big) \\
    &= 0.
\end{align*}

Consequently, the sequence
$
Z_n := \sum_{k=1}^{n} \alpha_k \inprod{\nabla f(x_k)}{\mathbf{T}_{2,k}}$, $n\geq 1$, 
defines a martingale with respect to the filtration $\{\mathcal{F}_n\}$.
From Assumption \ref{ass:Poisson}, we obtain 
$
\norm{A_{1,k+1}}_\ast \leq \mathbf{G}.
$ 
Next by following similar steps as Lemma \ref{Lemma1}, we obtain 
\begin{equation*}
    T_{2,n} = - \mathrm{diag} (x_n) A_{1,n+1} + \inprod{x_n}{A_{1,n+1}} x_n,
\end{equation*} 
and it is easy to verify that 
${\displaystyle 
    \norm{T_{2,n}} \leq 2 \mathbf{G}
}$. 
Then,
\begin{equation*}
    |Z_k - Z_{k-1}| 
    = \big| \alpha_k \inprod{\nabla f(x_k)}{\mathbf{T}_{2,k}} \big|
    \leq \alpha_k \norm{\nabla f(x_k)}_\ast \norm{\mathbf{T}_{2,k}}
    \leq 2\alpha_k \mathbf{G}^2.
\end{equation*}

Applying the Azuma--Hoeffding inequality, we obtain
\begin{align*}
    \mathbb{P}\Big(\min_{1\leq k \leq n} \text{Gap}(x_k) \geq \epsilon \Big) 
    &\leq \mathbb{P}\left( 
    \sum_{k=1}^{n} \alpha_k \inprod{\nabla f(x_k)}{\mathbf{T}_{2,k}} 
    \geq \frac{\epsilon^2}{3} \sum_{k=1}^{n} \alpha_k 
    \right) \\  
    &\leq \exp \left( 
    - \frac{\epsilon^4 \big(\sum\limits_{k=1}^{n} \alpha_k\big)^2}
    {18 \mathbf{G}^4 \sum\limits_{k=1}^{n} \alpha_k^2 } 
    \right).
\end{align*}
This completes the proof.
\end{proof}
The main idea behind the proof of Theorem \ref{concentration - nonconvex} is to separate the effect of the stochastic noise into two parts as we derive in Theorem \ref{theorem8}. One part is a martingale difference term, which can be controlled using a Azuma-Hoeffding Inequality. The other part is the Markov bias, whose contribution is quantified in Proposition~\ref{prop:T3_bound} which we state below. 

\begin{proposition}[Control of the Markov bias term]
\label{prop:T3_bound}
Under the standing assumptions, the contribution of the Markov bias term $\mathbf{T}_3$ satisfies
\begin{align*}
    \bigg| \sum_{k=1}^{n} \alpha_k \inprod{\nabla f(x_k)}{\mathbf{T}_{3,k}} \bigg|
    \leq 3 \mathbf{G}^2 \alpha_1
    + \bigg(
    L_\nu \mathbf{G}^2
    + L_{\Pi} \mathbf{G}^2
    \bigg)\sum_{k=1}^{n} \alpha_k^2.
\end{align*}
Here,
$
\mathbf{T}_{3,n}
=
\mathcal{P}_{\mathcal{T}_{\mathcal{X}}(x_n)}^{(x_n)}
\!\big(-\nabla^2 R(x_n)^{-1}A_{2,n+1}\big),
$
where $A_{2,n+1}$ is defined in Theorem~\ref{theorem8}.
\label{ProposMarkov}
\end{proposition}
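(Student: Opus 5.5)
We will mimic the summation-by-parts argument of Theorem~\ref{theorem8}, now applied to the scalar sequence $\alpha_k\inprod{\nabla f(x_k)}{\mathbf{T}_{3,k}}$. The first step is to use the explicit form of the projection on the relative interior of the simplex, as in Lemma~\ref{Lemma1}. There we showed $\mathcal{P}^{x}_{\mathcal{T}_\mathcal{X}(x)}(-\nabla^2R(x)^{-1}u) = -\mathrm{diag}(x)u + \inprod{x}{u}x$. This map is linear in $u$ and self-adjoint with respect to the Euclidean pairing. Hence
\[
\inprod{\nabla f(x_k)}{\mathbf{T}_{3,k}} = \inprod{\nu_{x_k}}{A_{2,k+1}},
\]
where $\nu_{x_k}=-\mathrm{diag}(x_k)\nabla f(x_k)+\inprod{x_k}{\nabla f(x_k)}x_k$ is exactly the field of Lemma~\ref{Lemma1}. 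The sum to control is therefore $\sum_k \alpha_k\inprod{\nu_{x_k}}{A_{2,k+1}}$. This has the same structure as \eqref{innerPoisson}, with $x^\ast-x_k$ replaced by $\nu_{x_k}$. Two facts make this replacement workable: $\|\nu_{x}\|_1\le 2\mathbf{G}$ on the simplex, and $x\mapsto\nu_x$ is $L_\nu$-Lipschitz in $\ell_1$ by Lemma~\ref{Lemma1}.

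Next, we split $A_{2,k+1}=T_{1,k+1}+T_{2,k+1}$ exactly as in the proof of Theorem~\ref{theorem8}. The first piece is $T_{1,k+1}=\gamma_k-\gamma_{k+1}$ with $\gamma_k=\widetilde G(x_k,S_k)$. The second piece is $T_{2,k+1}=\widetilde G(x_{k+1},S_{k+1})-\widetilde G(x_k,S_{k+1})$.

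For the $T_2$ part, we use Assumption~\ref{assumptionLipschitzGtilde} and \eqref{eqna}, with $\sigma_R=1$ for negative entropy in $\ell_1$ (Pinsker). This gives $\|T_{2,k+1}\|_\infty\le L_\Pi\|x_{k+1}-x_k\|_1\le L_\Pi\mathbf{G}\alpha_k$. Pairing with $\|\nu_{x_k}\|_1$ yields a term of order $L_\Pi\mathbf{G}^2\alpha_k^2$. We will be somewhat generous with constants here, absorbing the factor 2 as the statement suggests.

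For the $T_1$ part, we set $b_k=\alpha_k\nu_{x_k}$ and apply Abel summation:
\[
\sum_{k=1}^n\inprod{b_k}{\gamma_k-\gamma_{k+1}}=\inprod{b_1}{\gamma_1}-\inprod{b_n}{\gamma_{n+1}}+\sum_{k=1}^{n-1}\inprod{b_{k+1}-b_k}{\gamma_{k+1}}.
\]
The boundary terms are bounded by $\mathbf{G}\cdot\alpha_1\|\nu\|_1$, which gives the $O(\mathbf{G}^2\alpha_1)$ contribution. For the interior sum, we write
\[
b_{k+1}-b_k=(\alpha_{k+1}-\alpha_k)\nu_{x_{k+1}}+\alpha_k(\nu_{x_{k+1}}-\nu_{x_k}).
\]
The first part telescopes because the $\alpha_k$ are monotone, giving at most $\mathbf{G}\cdot\|\nu\|_1\alpha_1$. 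The second part is bounded by Lemma~\ref{Lemma1} and \eqref{eqna}: $\alpha_k L_\nu\|x_{k+1}-x_k\|_1\le L_\nu\mathbf{G}\alpha_k^2$. After multiplying by $\|\gamma_{k+1}\|_\infty\le\mathbf{G}$, this gives the $L_\nu\mathbf{G}^2\sum\alpha_k^2$ term. Collecting the boundary and telescoping terms gives the constant $3\mathbf{G}^2\alpha_1$, up to the normalization of $\|\nu\|_1$.

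The main obstacle is the first step: justifying the transfer of the projection onto $\nu_{x_k}$ via the self-adjointness of $u\mapsto-\mathrm{diag}(x)u+\inprod{x}{u}x$. Without it, one would have to sum by parts a sequence $\mathcal{P}^{x_k}(\cdot)$ that changes with $k$, and control the resulting $k$-dependence directly. A secondary issue is bookkeeping the $\ell_1/\ell_\infty$ duality and the bound $\|\nu_x\|_1\le 2\mathbf{G}$ (or $\mathbf{G}$ after a sharper estimate) so that the constants match the statement.
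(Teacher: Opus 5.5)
Your proposal follows the paper's proof essentially step for step: you transfer the projection onto $\nu_{x_k}$ by self-adjointness, use the same split of $A_{2,k+1}$ and Abel summation with $b_k=\alpha_k\nu_{x_k}$, and invoke Lemma~\ref{Lemma1} plus \eqref{eqna} for the $\alpha_k^2$ terms; your explicit symmetric matrix $-\mathrm{diag}(x)+xx^\top$ is a clean justification of the transfer, which the paper only asserts. The one loose end is the constant. A factor $2$ coming from $\|\nu_x\|_1\le 2\mathbf{G}$ cannot simply be ``absorbed'' into an explicit bound. The sharper estimate $\|\nu_x\|_1\le\mathbf{G}$ follows, as in the paper, from the optimality identity $\nu_x^\top\nabla^2R(x)\nu_x=-\inprod{\nu_x}{\nabla f(x)}\le\mathbf{G}\|\nu_x\|_1$ combined with $\|\nu_x\|_1^2\le\nu_x^\top\nabla^2R(x)\nu_x$ (Cauchy--Schwarz with $\sum_i x[i]=1$), and with it your bookkeeping gives exactly $3\mathbf{G}^2\alpha_1+(L_\nu+L_\Pi)\mathbf{G}^2\sum_k\alpha_k^2$.
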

\begin{proof}
  We begin by simplifying the inner product using the self-adjointness of the projection operator under the local metric induced by $\nabla^2 R(x_n)$. In particular, we have
\begin{align*}
   & \inprod{\nabla f(x_n)}{\mathcal{P}_{\mathcal{T}_{\mathcal{X}}(x_n)}^{(x_n)}\!\big(-\nabla^2 R(x_n)^{-1}A_{2,n+1}\big)} 
   \\ = & \inprod{\mathcal{P}_{\mathcal{T}_{\mathcal{X}}(x_n)}^{(x_n)}\!\big(-\nabla^2 R(x_n)^{-1}\nabla f(x_n)\big)}{A_{2,n+1}} = \inprod{\nu_n}{A_{2,n+1}},
\end{align*}
where
${\displaystyle    
\nu_n  = \mathcal{P}_{\mathcal{T}_{\mathcal{X}}(x_n)}^{(x_n)}
    \!\big(-\nabla^2 R(x_n)^{-1}\nabla f(x_n)\big)}$.

From the first-order optimality condition, for all $\nu \in \mathcal{T}_\mathcal{X}(x_n)$,
\begin{equation*}
    \inprod{\nabla^2 R(x_n)\big(\nu_n + \nabla^2 R(x_n)^{-1} \nabla f(x_n)\big)}{\nu - \nu_n} 
    \geq 0.
\end{equation*}

Choosing $\nu = 2\nu_n$ and $\nu = 0$, respectively, we obtain
\begin{equation*}
    \inprod{\nabla^2 R(x_n)\big(\nu_n + \nabla^2 R(x_n)^{-1} \nabla f(x_n)\big)}{\nu_n} = 0.
\end{equation*}

Expanding,  
${\displaystyle 
    \inprod{\nu_n}{\nabla^2 R(x_n)\nu_n}
    = - \inprod{\nu_n}{\nabla f(x_n)}}$.  
Using strong convexity and Hölder’s inequality gives us 
${\displaystyle 
    \|\nu_n\|^2 
    \leq \inprod{\nu_n}{\nabla^2 R(x_n)\nu_n} 
    = - \inprod{\nu_n}{\nabla f(x_n)} 
    \leq \mathbf{G} \|\nu_n\|}$. 
Thus,\\
${\displaystyle 
\|\nu_n\| \leq \mathbf{G}}$. 
Next, we follow a similar approach as with Theorem \ref{theorem8}.  In particular, we decompose
$
A_{2,k+1}
= \widetilde{G}(x_k, S_k) - \widetilde{G}(x_{k+1}, S_{k+1})
+ \widetilde{G}(x_{k+1}, S_{k+1}) - \widetilde{G}(x_k, S_{k+1}).
$


\medskip
\noindent
\textbf{Step 1: Analysis of the telescoping term: }
Let
$
\gamma_k := \widetilde{G}(x_k, S_k)$, 
$b_k := \alpha_k \nu_k.
$ 
Then $\|\gamma_k\|_\ast \leq \mathbf{G}$. Using summation by parts, we get
\begin{align}
    \bigg| \sum_{k=1}^{n} \alpha_k \inprod{\nu_k}{\gamma_k - \gamma_{k+1}} \bigg| 
    &\leq |\inprod{b_1}{\gamma_1}| + |\inprod{b_n}{\gamma_{n+1}}| 
    + \sum_{k=1}^{n-1} |\inprod{b_{k+1}-b_k}{\gamma_{k+1}}| \nonumber \\
    &\leq 2\alpha_1 \mathbf{G}^2 
    + \mathbf{G} \sum_{k=1}^{n-1} \|b_{k+1}-b_k\|. 
    \label{eq:T3_sum_parts}
\end{align}
We now bound $\|b_{k+1}-b_k\|$. Note that 
\begin{align*}
    \|b_{k+1}-b_k\|
    &= \|\alpha_{k+1} \nu_{k+1} - \alpha_k \nu_k\| 
    \leq \|\nu_{k+1}\|(\alpha_k - \alpha_{k+1})
    + \alpha_k \|\nu_{k+1} - \nu_k\|.
\end{align*}
Using $\|\nu_{k+1}\| \leq \mathbf{G}$ and Lipschitz continuity as proven in Lemma \ref{Lemma1},
\begin{align*}
    \|b_{k+1}-b_k\|
    &\leq \mathbf{G}(\alpha_k - \alpha_{k+1})
    + \alpha_k L_\nu \|x_{k+1} - x_k\|.
\end{align*}
Using $\|x_{k+1} - x_k\| \leq \alpha_k \mathbf{G}$ (according to \eqref{eqna}),
\begin{align*}
    \|b_{k+1}-b_k\|
    \leq \mathbf{G}(\alpha_k - \alpha_{k+1})
    + \alpha_k^2 L_\nu \mathbf{G}.
\end{align*}
Substituting,
\begin{align*}
    \bigg| \sum_{k=1}^{n} \alpha_k \inprod{\nu_k}{\gamma_k - \gamma_{k+1}} \bigg| 
    &\leq 2\alpha_1 \mathbf{G}^2
    + \mathbf{G} \sum_{k=1}^{n-1} 
    \left[
    \mathbf{G}(\alpha_k - \alpha_{k+1})
    + \alpha_k^2 L_\nu \mathbf{G}
    \right].
\end{align*}
Since $\sum\limits_{k=1}^{n-1} (\alpha_k - \alpha_{k+1}) = \alpha_1 - \alpha_n$,
\begin{align*}
    \bigg| \sum_{k=1}^{n} \alpha_k \inprod{\nu_k}{\gamma_k - \gamma_{k+1}} \bigg| 
    &\leq 2\alpha_1 \mathbf{G}^2
    + \mathbf{G}^2(\alpha_1 - \alpha_n)
    + L_\nu \mathbf{G}^2 \sum_{k=1}^{n-1} \alpha_k^2 \\
    &\leq 3 \mathbf{G}^2 \alpha_1
    + L_\nu \mathbf{G}^2 \sum_{k=1}^{n} \alpha_k^2.
\end{align*}
\medskip
\noindent
 \textbf{Step 2: Analysis of the Lipschitz difference term}

Let $L_{\Pi}$ denote the Lipschitz constant of $\widetilde{G}(\cdot, S)$. Then,
\begin{align*}
 &   \bigg| \sum_{k=1}^{n} \alpha_k 
    \inprod{\nabla f(x_k)}{\widetilde{G}(x_{k+1},S_{k+1}) - \widetilde{G}(x_k,S_{k+1})} \bigg|
   \\ \leq & \sum_{k=1}^{n} \alpha_k 
    \|\nabla f(x_k)\|_\ast 
    \|\widetilde{G}(x_{k+1},S_{k+1}) - \widetilde{G}(x_k,S_{k+1})\|.
\end{align*}

Using $\|\nabla f(x_k)\|_\ast \leq \mathbf{G}$ and Lipschitz continuity of $\widetilde{G}(\cdot,\cdot)$ in the first argument, uniformly w.r.t.~the second, we obtain 
\begin{align*}
    \bigg| \sum_{k=1}^{n} \alpha_k 
    \inprod{\nabla f(x_k)}{\widetilde{G}(x_{k+1},S_{k+1}) - \widetilde{G}(x_k,S_{k+1})} \bigg|
    &\leq \sum_{k=1}^{n} \alpha_k \mathbf{G} L_{\Pi} \|x_{k+1}-x_k\|.
\end{align*}

Using $\|x_{k+1} - x_k\| \leq \alpha_k \mathbf{G}$,
\begin{align*}
    \bigg| \sum_{k=1}^{n} \alpha_k 
    \inprod{\nabla f(x_k)}{\widetilde{G}(x_{k+1},S_{k+1}) - \widetilde{G}(x_k,S_{k+1})} \bigg|
    &\leq L_{\Pi} \mathbf{G}^2 \sum_{k=1}^{n} \alpha_k^2.
\end{align*}
Combining this with Step~1 completes the proof.
\end{proof}
Before concluding, we present an important consequence of Theorem~\ref{concentration - nonconvex} under an additional structural assumption, commonly known as the relative gradient domination condition (or the Riemannian Polyak--\L{}ojasiewicz condition). This condition leads to stronger convergence guarantees and naturally appears in reinforcement learning problems, particularly in average-reward Markov decision processes \cite{li2025stochastic}.
\begin{assumption}
Let $x^\ast$ be a global minimizer of the problem \eqref{eq:main_problem}, where the constraint set $\mathcal{X}$ is a probability simplex. Assume that the objective function $f$ satisfies the gradient domination condition relative to the mirror map $R$. Specifically, there exists a constant $\mu>0$ such that, for every $x \in \mathrm{reint}(\mathcal{X})$,
\[
f(x)-f(x^\ast)
\leq
\frac{1}{2\mu}\nu_x^\top \nabla^2 R(x)\nu_x
\leq
\frac{1}{2\mu}\norm{\nu_x}_x^2.
\]
\label{gradient domination}
\end{assumption}
Assumption \ref{gradient domination} is similar to the gradient domination in policy mirror descent algorithm \cite{agarwal2021theory,xiao2022convergence}.
\begin{corollary}
\label{cor:main_convergence}
Fix $\epsilon > 0$. Let $n$ be sufficiently large such that
$n \geq \max\{n_1, n_2\}$, 
where $n_1$ and $n_2$ are the smallest integers satisfying
\[
    f(x_1) + 3 \mathbf{G}^2 
    \leq \frac{\epsilon}{3} 
    \sum_{k=1}^{n_1} \alpha_k,
   \quad \left( 
    L_\nu \mathbf{G}^2 
    + L_{\Pi} \mathbf{G}^2 
    + \frac{L \mathbf{G}^2}{2} 
    \right) 
    \sum_{k=1}^{n_2} \alpha_k^2 
    \leq \frac{\epsilon}{3} 
    \sum_{k=1}^{n_2} \alpha_k,
\]
%
%
respectively. Then, for all $n \geq \max\{n_1, n_2\}$, the following high-probability bound holds:
\begin{equation*}
    \mathbb{P}\Big(\min_{1 \leq k \leq n} \{ f(x_k) - f^\ast \} \geq \epsilon \Big)
    \leq 
    \exp \left( 
    - \frac{\epsilon^2 \left(\sum\limits_{k=1}^{n} \alpha_k\right)^2}
    {18 \mathbf{G}^4 \sum\limits_{k=1}^{n} \alpha_k^2 } 
    \right).
\end{equation*}
\label{concentration - nonconvex PL}
\end{corollary}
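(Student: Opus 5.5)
The plan is to rerun the telescoping argument from the proof of Theorem~\ref{concentration - nonconvex}, with the squared stationarity gap replaced by the optimality gap through Assumption~\ref{gradient domination}. We do not compose the two results at the level of probabilities. From the descent inequality \eqref{lsmoothness}, the tangent-cone representation of Corollary~\ref{TangentConeRepresentation}, and the identity $\nabla f(x_k)^\top \nu_k = -\|\nu_k\|_{x_k}^2$, the proof of Theorem~\ref{concentration - nonconvex} yields the deterministic pathwise inequality
\begin{equation*}
\sum_{k=1}^n \alpha_k \|\nu_k\|_{x_k}^2 \leq f(x_1) - \inf f + \sum_{k=1}^n \alpha_k \inprod{\nabla f(x_k)}{\mathbf{T}_{2,k}} + \sum_{k=1}^n \alpha_k \inprod{\nabla f(x_k)}{\mathbf{T}_{3,k}} + \frac{L\mathbf{G}^2}{2}\sum_{k=1}^n \alpha_k^2 .
\end{equation*}
Here we use the simplex normalization, in which $\sigma_R$ is absorbed into the constants as in the statement.

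\textbf{Gradient domination.} Assumption~\ref{gradient domination} gives $\|\nu_k\|_{x_k}^2 \geq 2\mu\,(f(x_k)-f^\ast)$. Since all iterates remain in $\mathrm{reint}(\mathcal{X})$, the assumption applies at every $x_k$. On the event $\{\min_{k\le n} (f(x_k)-f^\ast) \geq \epsilon\}$, the left-hand side is therefore at least $2\mu\epsilon\sum_k \alpha_k$. Taking $\mu$ normalized so that $2\mu\ge 1$, which the stated constants implicitly require, this is at least $\epsilon\sum_k\alpha_k$. The only change from Theorem~\ref{concentration - nonconvex} is that $\epsilon^2$ becomes $\epsilon$.

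\textbf{Splitting into thirds.} The Markov bias term is bounded by Proposition~\ref{ProposMarkov}, using $\alpha_1\le 1$, by $3\mathbf{G}^2 + (L_\nu+L_\Pi)\mathbf{G}^2\sum_k\alpha_k^2$. By the definitions of $n_1$ and $n_2$, for $n\ge\max\{n_1,n_2\}$ the constant part $f(x_1)+3\mathbf{G}^2$ is at most $\tfrac{\epsilon}{3}\sum_k\alpha_k$. The $\alpha_k^2$ part, which combines $L_\nu$, $L_\Pi$ and $L/2$, is likewise at most $\tfrac{\epsilon}{3}\sum_k\alpha_k$. One point needs care: $n_1$ and $n_2$ are defined as the smallest integers satisfying the inequalities, so we must check that the inequalities persist for larger $n$. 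For the first, this holds because $\sum\alpha_k$ is increasing. For the second, it holds for the step sizes of interest (e.g.\ $\alpha_k=k^{-1/2}$), because the ratio $\sum\alpha_k^2/\sum\alpha_k$ decreases eventually. Alternatively, we can read the hypothesis as holding for all $n\ge n_i$, as in Theorem~\ref{concentration - nonconvex}. Combining these bounds, the event forces the martingale $Z_n=\sum_k\alpha_k\inprod{\nabla f(x_k)}{\mathbf{T}_{2,k}}$ to satisfy $Z_n\ge \tfrac{\epsilon}{3}\sum_k\alpha_k$.

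\textbf{Azuma--Hoeffding.} Finally, we apply Azuma--Hoeffding exactly as in Theorem~\ref{concentration - nonconvex}. The increments are bounded by $2\alpha_k\mathbf{G}^2$, giving the bound $\exp\big(-\epsilon^2(\sum\alpha_k)^2/(18\mathbf{G}^4\sum\alpha_k^2)\big)$. The main obstacle is not probabilistic. It is tracking the constant $\mu$ and the $\sigma_R$ normalization so that the threshold $\epsilon/3$ and the constant $18$ come out as stated. If $2\mu<1$, the threshold would instead be $2\mu\epsilon/3$, and the exponent would carry an extra factor $4\mu^2$. I would state the proof under the normalization $2\mu\ge1$ and note this dependence.
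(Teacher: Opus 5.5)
Your argument is the one the paper intends. The paper gives no separate proof: the corollary is the telescoping argument of Theorem~\ref{thm:main_convergence} with $\|\nu_k\|_{x_k}^2\ge 2\mu(f(x_k)-f^\ast)$ substituted. Equivalently, it is the pathwise inclusion $\{\min_k (f(x_k)-f^\ast)\ge\epsilon\}\subseteq\{\min_k \mathrm{Gap}(x_k)\ge\sqrt{2\mu\epsilon}\}$ followed by that theorem at level $\sqrt{2\mu\epsilon}$. Your observation that the stated constants silently require $2\mu\ge 1$ is correct. Your caveat about $n_2$ can be dropped. Because the step sizes are non-increasing, $\alpha_{n+1}\sum_{k\le n}\alpha_k\le\sum_{k\le n}\alpha_k^2$. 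Hence $\sum_{k\le n}\alpha_k^2/\sum_{k\le n}\alpha_k$ is non-increasing for every $n$, and both defining inequalities persist beyond $n_1,n_2$.

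The last step, however, does not give what you claim. With $|Z_k-Z_{k-1}|\le c_k=2\alpha_k\mathbf{G}^2$, Azuma--Hoeffding yields $\exp\bigl(-t^2/(2\sum_k c_k^2)\bigr)$. At $t=\frac{\epsilon}{3}\sum_k\alpha_k$ the denominator is $72\mathbf{G}^4\sum_k\alpha_k^2$, not $18\mathbf{G}^4\sum_k\alpha_k^2$. This is exactly the computation that produces the $72$ in Theorem~\ref{cvrgncethm}. The $18$ is carried over from the proof of Theorem~\ref{thm:main_convergence}, which contains the same slip. So, contrary to your last paragraph, the normalization $2\mu\ge 1$ alone does not make $18$ come out.

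To genuinely obtain $18$, write $\inprod{\nabla f(x_k)}{\mathbf{T}_{2,k}}=-\sum_i x_k[i]\,(\nabla f(x_k)[i]-\bar g_k)(A_{1,k+1}[i]-\bar a_k)$, where $\bar g_k=\inprod{x_k}{\nabla f(x_k)}$ and $\bar a_k=\inprod{x_k}{A_{1,k+1}}$. This is minus an $x_k$-weighted covariance, and it is bilinear in $A_{1,k+1}$. Its only part that is not $\mathcal{F}_k$-measurable comes from $\widetilde{G}(x_k,S_{k+1})$, whose entries lie in $[-\mathbf{G},\mathbf{G}]$. Each weighted variance is at most $\mathbf{G}^2$, so by Cauchy--Schwarz the increment lies, conditionally on $\mathcal{F}_k$, in an interval of length $2\alpha_k\mathbf{G}^2$. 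The predictable-range form of Azuma--Hoeffding, $\exp\bigl(-2t^2/\sum_k r_k^2\bigr)$ with $r_k=2\alpha_k\mathbf{G}^2$, then gives the stated $18$. Without this refinement, the constant your argument supports is $72$. Like the paper, you also silently assume $f^\ast\ge 0$ when you replace $f(x_1)-\inf f$ by $f(x_1)$ in the condition defining $n_1$.
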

\section{Conclusion and Future Work}
We studied stochastic mirror descent under iterate-dependent Markov noise, a setting that naturally arises in stochastic optimization with state-dependent uncertainty. Under mild regularity assumptions, requiring only Lipschitz continuity, we established almost sure convergence for both convex and non-convex objectives. We also derived finite-time concentration guarantees in the smooth setting. In the convex case, the resulting sample complexity matches that of stochastic mirror descent under classical i.i.d.\ noise. For non-convex problems, we obtained finite-time guarantees in terms of a Riemannian gradient measure on the probability simplex, extending existing analyses beyond the standard Euclidean and i.i.d.\ settings.

The results presented in this paper open several avenues for further research. A natural direction is to extend the finite-time analysis to broader classes of non-smooth objectives. Another promising direction is the study of distributionally robust mirror descent through a dynamical systems viewpoint. Finally, it would be of interest to explore the implications of our analysis in application-driven settings, particularly in variants of mirror descent arising in reinforcement learning, such as policy mirror descent.


\bibliographystyle{siamplain}
\bibliography{references}
\end{document}